\newtheorem{thm}{Theorem}
\newtheorem{defn}{Definition}
\newtheorem{cor}{Corollary}
\newtheorem{fact}{Fact}
\newtheorem{lemma}{Lemma}
\newtheorem{quest}{Question}
\newcommand{\ecc}{\textrm{ecc}}
\newcommand{\dist}{\textrm{dist}}
\DeclarePairedDelimiter{\floor}{\lfloor}{\rfloor}
\title{A Path Variant of the Explorer-Director Game on Graphs}
\author{Abigail Raz\footnote{Department of Mathematics, The Cooper Union, 10008 New York City, NY, USA, abigail.raz@cooper.edu - Corresponding author} \and Paddy Yang\footnote{Department of Electrical Engineering, The Cooper Union, 10008 New York City, NY, USA, paddy.yang@cooper.edu}}
\providecommand{\keywords}[1]{\textit{Keywords:} #1}
\begin{document}

\maketitle
\begin{abstract}
The Explorer-Director game, first introduced by Nedev and Muthukrishnan (2008), simulates a Mobile Agent exploring a ring network with an inconsistent global sense of direction. Two players, the Explorer and the Director, jointly control a token's movement on the vertices of a graph $G$ with initial location $v$. Each turn, the Explorer calls any valid distance, $d$, aiming to maximize the number of vertices the token visits, and the Director moves the token to any vertex distance $d$ away aiming to minimize the number of visited vertices. The game ends when no new vertices can be visited, assuming optimal play, and we denote the total number of visited vertices by $f_d(G,v)$. 
Here we study a variant where, if the token is on vertex $u$, the Explorer is allowed to select any valid \emph{path length}, $\ell$, and the Director now moves the token to any vertex $v$ such that $G$ contains a $uv$ path of length $\ell$. The corresponding parameter is $f_p(G,v)$. In this paper, we explore how far apart $f_d(G,v)$ and $f_p(G,v)$ can be, proving that for any $n$ there are graphs $G$ and $H$ with $f_p(G,v)-f_d(G,v)>n$ and $f_d(H,v)-f_p(H,v)>n$. 
\end{abstract}
\keywords{Games on graphs, distance, path length}

\section{Introduction}
Consider the following game played by two players - an Explorer and a Director. The game is played on a graph where players jointly move a single token around the vertices. The token originates at a given vertex and, in each round, the Explorer selects a distance for the token to move. The Director chooses to which vertex, of the appropriate distance from the current location, to move the token. The goal of the Explorer is to maximize the number of vertices the token visits, while the Director's goal is to minimize that same number. We traditionally consider the game to be over when the Director is able to indefinitely keep the token on an already visited vertex. We will denote the final number of visited vertices, assuming optimal play, when playing on graph $G$ with starting vertex $v$ by $f_d(G,v)$. Note that, by convention, we count the starting vertex as a visited vertex even if the token never returns there after the start of the game.

This game was first introduced by Nedev and Muthukrishnan as the Magnus-Derek Game to simulate the behavior of a \emph{Mobile Agent} exploring a ring network where there is an inconsistent global sense of direction \cite{NM}. To represent a ring network, Nedev and Muthukrishnan only focused on the Explorer-Director game on cycles. Note that on cycles, the starting vertex is irrelevant. Nedev and Muthukrishnan proved the following in 2008.
\begin{thm}\cite{NM}\label{NMthm}
For any vertex $v$ in $C_n$ we have
    \[f_d(C_n,v)=\begin{cases}
n & \text{if } n=2^k \text{ for some } k>0 \\
\frac{n(p-1)}{p} & \text{if } p \text{ is the smallest odd prime factor of } n.
\end{cases} \]

\end{thm}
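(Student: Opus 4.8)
The plan is to recast the whole game in group-theoretic terms and reduce the formula to a single extremal combinatorial quantity. Identify $V(C_n)$ with $\mathbb{Z}_n$ and (since the excerpt notes the start is irrelevant on cycles) take $v=0$. Calling a distance $d\in\{1,\dots,\lfloor n/2\rfloor\}$ lets the Director add $+d$ or $-d$, so as $d$ ranges over all legal values the Director's increments range over all of $\mathbb{Z}_n\setminus\{0\}$, with the Explorer choosing the pair $\{d,-d\}$ and the Director the sign. Call $T\subseteq\mathbb{Z}_n$ \emph{trapping} if for every $y\in T$ and every $d\neq 0$ at least one of $y\pm d$ lies in $T$; these are exactly the sets in which the Director can confine the token forever. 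The first step is to prove $f_d(C_n,v)=\min\{|T|:T\text{ trapping},\,v\in T\}$. For ``$\le$'', the Director simply stays inside a minimum trapping set containing $v$, so the visited set is contained in it. For ``$\ge$'', note that by translation invariance the minimum over trapping sets containing $v$ equals the minimum $\tau$ over all trapping sets; whenever the visited set $S$ has $|S|<\tau$ it contains no trapping subset, so the Director cannot remain in $S$ and the Explorer eventually forces a new vertex. Hence $f_d(C_n,v)=\tau$, independent of $v$.

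Next I would turn the computation of $\tau$ into an additive problem by passing to complements. A set $T$ is trapping exactly when its complement $U$ contains no pair symmetric about any point outside it, i.e.\ $U$ is closed under midpoints: if $u_1,u_2\in U$ and $2y=u_1+u_2$ then $y\in U$. Thus $\tau=n-M(n)$, where $M(n)$ is the largest size of a \emph{proper} midpoint-closed subset. For odd $m$ this is clean: $2$ is a unit, midpoints are unique, and a nonempty midpoint-closed set translated to contain $0$ is closed under $x\mapsto x/2$, hence (as $2$ has finite multiplicative order) under doubling, hence under addition, so it is a subgroup. Therefore the midpoint-closed sets are precisely the cosets of subgroups, the largest proper one has size $m/p$ for $p$ the least prime factor, and $f_d(C_m,v)=m-m/p$ for odd $m>1$, with $f_d(C_1,v)=1$.

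To extend from the odd part $m$ to $n=2^a m$ I would run a parity recursion for the lower bound rather than fight the even case of the extremal problem directly. The key point is that the parity of the token is controlled \emph{entirely} by the Explorer, since $+d$ and $-d$ have the same parity, and that the even vertices and the odd vertices each form a copy of $C_{n/2}$ under even moves (after scaling by $\tfrac12$). So the Explorer plays an optimal $C_{n/2}$ strategy using only even moves, visiting as many even vertices as the optimum guarantees, then calls an odd distance once to cross to a fresh odd vertex, then repeats on the odd copy; disjointness of the two parity classes yields $f_d(C_n,v)\ge 2f_d(C_{n/2},v)$. Iterating down to $m$ gives $f_d(C_n,v)\ge 2^a f_d(C_m,v)=n-n/p$ (and $=2^a=n$ when $m=1$). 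The matching upper bound comes from the Director permanently avoiding one residue class modulo $p$: because $p$ is odd, $y+d$ and $y-d$ are distinct mod $p$ whenever $p\nmid d$, so at most one is forbidden and the Director can always dodge it, leaving $n/p$ vertices unvisited; for $n=2^k$ the trivial bound $\le n$ closes the case.

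The hard part will be the combinatorial heart in the second step: proving that every nonempty midpoint-closed set is a coset, and hence $M(m)=m/p$ for odd $m$, together with the bookkeeping that the Explorer can genuinely force a new vertex from every configuration below the trapping threshold. The parity recursion is conceptually easy but demands care that the Director cannot sabotage the single crossing move or leak between parity classes, which it cannot precisely because parity is Explorer-controlled. I deliberately route the factor of $2^a$ through this recursion because the even case of the pure trapping-set computation (showing $M(2^a m)=n/p$ directly) is genuinely messier: $2$ is no longer invertible and midpoints come in pairs, so the clean ``midpoint-closed equals coset'' structure breaks down.
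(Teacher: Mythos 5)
Your proposal is essentially correct, but note first that the paper does not prove Theorem \ref{NMthm} at all: it is quoted from Nedev and Muthukrishnan \cite{NM}, so the fair comparison is with their original argument and with the closed-set machinery the paper imports from \cite{ED}. Your ``trapping sets'' are exactly the closed sets of Definition \ref{closeddef} specialized to $C_n$ (the realizable distances from $u$ are $1,\dots,\lfloor n/2\rfloor$, and closedness says precisely that $u+d$ or $u-d$ stays in the set), and your first step re-derives Theorems \ref{closedthm} and \ref{closedbound} in the vertex-transitive case; the one point needing the standard care is the lower-bound direction, where ``the Director cannot remain in $S$'' should be justified by observing that the set of positions from which the Director can stay in $S$ forever is itself trapping if nonempty, contradicting $|S|<\tau$ --- your phrasing gestures at exactly this and is sound. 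The genuinely different content is the middle of the argument: \cite{NM} proceed by direct strategy analysis (the Director's avoidance of a residue class modulo $p$, which you reproduce for the upper bound, together with an explicit Explorer strategy for the lower bound), whereas you obtain the lower bound structurally, classifying the midpoint-closed complements for odd $m$ as cosets of subgroups (correct: translating $0$ into $U$, halving plus the finite multiplicative order of $2$ gives closure under doubling, hence under addition, hence a subgroup, so the largest proper one has size $m/p$) and routing the factor $2^a$ through the parity recursion $f_d(C_n)\ge 2f_d(C_{n/2})$, which is valid because $+d$ and $-d$ share parity, the even-distance game on a parity class is isomorphic to the game on $C_{n/2}$, and the single odd crossing necessarily lands on an unvisited vertex since no odd vertex has yet been touched. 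Two details you handled correctly and should keep explicit in a full write-up: the midpoint condition must allow $u_1=u_2$ so that for even $n$ the antipodal constraint coming from $d=n/2$ is captured (your formulation does allow it, and in any case you deliberately route the even case through the recursion), and the recursion must bottom out at $f_d(C_1)=1$ to give $f_d(C_{2^k})=2^k$. Overall your route is closer in spirit to the algebraic treatments of Gerbner \cite{G} and Asgeirsson and Devlin \cite{AD} than to the original proof; it buys a conceptually cleaner derivation of the extremal constant $n/p$ at the price of the coset-classification lemma, while the original approach is more elementary but more ad hoc.
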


The early research following \cite{NM} focused on alternative algorithms to achieve $f_d(C_n,v)$ more quickly with reduced computational complexity \cite{HPW, N10, N14, CLST}.
Additionally, Chen et al.\ developed strategies for other game variants, while Gerbner introduced an algebraic generalization for the original game, in which positions $g$ $\in$ $G$ are seen as elements of some cyclic group, $G$ \cite{CLST, G}.  Asgeirsson and Devlin further built on this idea \cite{AD}. Through relating the game to combinatorial group theory and the notion of \emph{twisted subgroup}, they expanded the scope of the game beyond cyclic groups to include other general finite groups, and named the game as the Explorer-Director Game. Finally, in \cite{ED} the first author alongside Devlin, Meger, and students through the 2020 Polymath REU investigated the Explorer-Director game on lattices and trees. Additionally, \cite{ED} reduces the study of $f_d (G,v)$ to the determination of the minimal sets of vertices that are \textit{closed} in a certain graph theoretic sense, providing a general lower bound for $f_d(G,v)$ in terms of the minimum eccentricity in $G$. Finally, in \cite{ED} a variant of the Explorer-Director game was introduced. In this variant, if the token is on vertex $u$, the Explorer is now allowed to select any valid \emph{path length}, $\ell$, and the Director can now move the token to any vertex $v$ such that $G$ contains a $uv$ path of length $\ell$.  This variation is perhaps a more natural extension of the original motivation of simulating the behavior of the Mobile Agent by allowing it to move arbitrarily since there is no guarantee the Mobile Agent moves along a shortest path. We will call this the \emph{path variant} of the Explorer-Director game and denote the final number of visited vertices, assuming optimal play, when playing on graph $G$ with starting vertex $v$ by $f_p(G,v)$. In this paper we focus on comparing how different $f_d(G,v)$ and $f_p(G,v)$ can be for certain nice graph families. 

The paper is organized as follows. In Section \ref{prelim}, we introduce necessary graph preliminaries. In Section \ref{bipan}, we show that for all bipanpositionable graphs on at least $4$ vertices, $f_p(G,v)=4$ for any starting vertex $v$. Then in Section \ref{HC}, we focus on a specific class of bipanpositionable graphs, namely hypercubes, and give a variety of bounds and exact results on $f_d(G,v)$ for various infinite families of hypercubes. Finally, in Section \ref{JF}, we introduce an infinite family of graphs where $f_p(G,v)$ grows arbitrarily larger than $f_d(G,v)$, with respect to the number of vertices.

\section{Preliminaries}\label{prelim}
We begin with a discussion of notation and terminology we will use throughout the paper. Much of what is discussed here is common, but we recall it here to avoid ambiguity. Given a graph $G$, let $V(G)$ denote the vertex set of $G$. We use $N(v)$ to denote the neighborhood of $v \in V(G)$. Recall that the distance between two vertices, $u$ and $v$, is denoted by $\dist(u,v)$, and is the length of a shortest $uv$ path. When $H$ is some subgraph of $G$ and $u,v \in V(H)$ we let $\dist_H(u,v)$ denote the length of the shortest $uv$ path in $H$. Furthermore, the \emph{eccentricity} of a vertex $v \in V(G)$, denoted $ecc_G(v)$, or simply $ecc(v)$, is the maximum distance from $v$ to any other vertex in the graph. 

In \cite{ED}, the authors gave the following definition of a \emph{closed} set to provide a combinatorial characterization of the structure of the set of visited vertices at the end of an optimally played Explorer-Director game. 

\begin{defn}\label{closeddef}\cite{ED}
On a graph $G=(V,E)$, we define a non-empty subset $U \subseteq V$ to be \emph{closed} when every vertex $u \in U$ has the property that for every vertex $v \in V$, there exists some vertex $x \in U$ such that $\dist(u,v) = \dist(u,x)$.
\end{defn}

This definition led to the following result from \cite{ED}.
\begin{thm}\label{closedthm}\cite{ED}
    Given a graph $G$ at the end of any Explorer-Director game on $G$, the set $U$ of all visited vertices always contains some closed set in $G$.
\end{thm}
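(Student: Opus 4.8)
The plan is to single out, among the visited vertices, the exact set from which the Director is able to carry out the confinement that ends the game, and to show that this set is precisely a closed set. Fix the terminal configuration: let $U$ be the set of visited vertices and let $w \in U$ be the location of the token when the game ends. The central object is
\[
D = \{\, u \in U : \text{from } u \text{ the Director can keep the token inside } U \text{ forever} \,\}.
\]
Since the game terminates exactly when the Director is able to indefinitely confine the token to already-visited vertices, the final position $w$ lies in $D$, so $D$ is nonempty; and by construction $D \subseteq U$. It therefore suffices to prove that $D$ is closed.

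To show $D$ is closed, I would take any $u \in D$ and any $v \in V(G)$, and set $d = \dist(u,v)$, a valid distance from $u$. By the definition of $D$ the Director has a strategy that keeps the token in $U$ forever starting from $u$, and in particular this strategy must answer the Explorer's call of distance $d$. Its response is a move to some vertex $x$ with $\dist(u,x) = d$; and because the Director goes on confining the token to $U$ after this move, the residual strategy witnesses that from $x$ the Director can again keep the token in $U$ forever, so $x \in D$. This produces a vertex $x \in D$ with $\dist(u,x) = \dist(u,v)$, which is exactly the defining property of a closed set. Hence $D \subseteq U$ is a nonempty closed set.

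The one point that needs care—and the part I expect to be the main obstacle—is assigning a rigorous meaning to ``the Director can keep the token inside $U$ forever,'' since this refers to a game of unbounded duration and the definition of $D$ is self-referential. I would make this precise through a standard monotone fixed-point (safety-game) argument on the finite graph $G$. Define $W_0 = U$ and, for $i \ge 0$,
\[
W_{i+1} = \{\, u \in W_i : \text{for every valid distance } d \text{ from } u \text{ there is } x \in W_i \text{ with } \dist(u,x) = d \,\},
\]
so that $W_i$ is exactly the set of vertices from which the Director can avoid leaving $U$ for at least $i$ moves. The chain $W_0 \supseteq W_1 \supseteq \cdots$ is decreasing and $G$ is finite, so it stabilizes to a set $W_\infty$, which coincides with $D$. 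Stabilization also dissolves the apparent quantifier issue in the closedness argument: once $W_i = W_\infty$ for all large $i$, the witness $x$ realizing a given distance $d$ may be taken directly in $W_\infty = D$ rather than merely in some $W_i$. With $D$ thereby well-defined and nonempty (as $w \in D$), the direct argument of the previous paragraph applies, and the theorem follows.
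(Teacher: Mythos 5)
Your proof is correct, but note that there is nothing in this paper to compare it against: Theorem~\ref{closedthm} is stated here as a cited result from \cite{ED}, and no proof is reproduced in this manuscript. On its own merits, your argument is sound and is essentially the canonical one: the set of vertices from which the Director wins the safety game ``never leave $U$'' is nonempty (the termination convention says precisely that the final token position $w$ admits indefinite confinement to $U$), and the one-step closure property of that winning region is literally Definition~\ref{closeddef}, since the valid calls from $u$ are exactly the distances $\{\dist(u,v) : v \in V\}$, with the degenerate case $v = u$ witnessed by $x = u$. Your instinct that the self-referential definition of $D$ is the only delicate point is right, and your fixed-point resolution is the standard and correct one. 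Two small streamlinings are available: you never need the full identity $D = W_\infty$ (which requires the converse direction, that the Explorer forces an exit from any $u \notin W_\infty$) --- it suffices that $w \in W_i$ for every $i$, since a confining strategy from $w$ in particular survives $i$ rounds, whence $w \in W_\infty \neq \emptyset$, and the stabilization equation $W_\infty = W_{\infty+1}$ \emph{is} the closedness of $W_\infty$; consequently your first, informal residual-strategy paragraph can be deleted entirely, as the fixed-point computation already constitutes the whole proof. With that trimming, the argument is complete and fully rigorous on any finite graph.
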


This led to the following two general bounds. 

\begin{thm}\label{closedbound}\cite{ED}
For any graph $G$ we have $\min_{v \in V}f_d(G,v)$ is the cardinality of a minimum closed set in $G$.
\end{thm}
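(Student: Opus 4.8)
The plan is to prove the equality by establishing two matching inequalities: that $\min_{v \in V} f_d(G,v)$ is at least the cardinality of a minimum closed set, and that it is at most that cardinality. Throughout, write $c$ for the cardinality of a minimum closed set in $G$.

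For the lower bound, I would argue that $f_d(G,v) \geq c$ for \emph{every} starting vertex $v$, from which $\min_{v \in V} f_d(G,v) \geq c$ follows at once. This is essentially a repackaging of Theorem \ref{closedthm}: fix $v$ and play the game to completion under optimal play, obtaining a visited set $U_v$ with $|U_v| = f_d(G,v)$. By Theorem \ref{closedthm}, $U_v$ contains some closed set, and since every closed set has cardinality at least $c$, we conclude $f_d(G,v) = |U_v| \geq c$. This direction needs no new work beyond invoking the earlier theorem.

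For the upper bound, I would exhibit a single starting vertex together with a Director strategy that confines the token forever to a set of size $c$. Let $U$ be a minimum closed set, so $|U| = c$, and start the game at any vertex $v \in U$. The Director maintains the invariant that the token always lies in $U$. Suppose the token sits on some $u \in U$ and the Explorer calls a valid distance $d$; validity means there is a vertex $w \in V$ with $\dist(u,w) = d$. Applying Definition \ref{closeddef} to $u \in U$ and this $w$ produces a vertex $x \in U$ with $\dist(u,x) = \dist(u,w) = d$, so the Director may legally move the token to $x \in U$, preserving the invariant. Hence every visited vertex, including the start $v$, lies in $U$, giving $f_d(G,v) \leq |U| = c$ and therefore $\min_{v \in V} f_d(G,v) \leq c$. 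Combining the two inequalities yields the claimed equality.

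The one point demanding care is verifying that the Director strategy is always legal, i.e.\ that the closed property supplies an in-$U$ target for every distance the Explorer can call. I expect this to be the main (though modest) obstacle: one must make explicit that a ``valid distance'' is exactly a distance realized by some vertex of $G$ from the current location, which is precisely the hypothesis under which Definition \ref{closeddef} guarantees a matching vertex inside $U$. Once this alignment between the notion of a legal move and the definition of closedness is spelled out, the invariant is preserved throughout play and the bound follows.
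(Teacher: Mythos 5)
Your proof is correct; the paper itself states this theorem without proof (it is imported from \cite{ED}), and your two-inequality argument --- Theorem \ref{closedthm} for the lower bound over every starting vertex, and a Director strategy that confines the token to a minimum closed set started inside that set for the upper bound --- is exactly the standard argument underlying the citation. Your final point of care, that a ``valid distance'' is precisely one realized by some vertex of $G$ from the current location so that Definition \ref{closeddef} always supplies a legal in-set target, is indeed the only step requiring explicit verification, and you have handled it correctly.
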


\begin{cor}\label{ecc}\cite{ED}
If in a game there exists a set of vertices $A \subseteq V$ which the Explorer can force the Director to visit at least once, then $\max_{v \in V(G)} f_d(G,v) \geq \min_{v \in A}\ecc(v) + 1$.
\end{cor}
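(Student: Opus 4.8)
The plan is to reduce the statement to a single purely local inequality, namely that $f_d(G,a)\ge \ecc(a)+1$ for \emph{every} vertex $a$, and then feed that into the forcing hypothesis. First I would exploit the hypothesis directly. Consider the game it refers to, say started at a vertex $s$, in which the Explorer has a strategy that forces the token to enter $A$, and let $a\in A$ be the first vertex of $A$ that the token occupies. From that moment the Explorer switches to playing an optimal game \emph{as if starting afresh at} $a$; this is legitimate because the value of the game and the optimal moves depend only on the current position of the token, not on the history. Hence, no matter how the Director plays, the Explorer can guarantee that at least $f_d(G,a)$ distinct vertices appear after the token reaches $a$, and all of these lie in the total visited set, so the whole game visits at least $f_d(G,a)$ distinct vertices. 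Since $a\in A$ we have $\ecc(a)\ge \min_{v\in A}\ecc(v)$, so granting the local bound $f_d(G,a)\ge \ecc(a)+1$ we obtain $f_d(G,s)\ge \ecc(a)+1\ge \min_{v\in A}\ecc(v)+1$, and therefore $\max_{w\in V(G)} f_d(G,w)\ge f_d(G,s)\ge \min_{v\in A}\ecc(v)+1$, as desired.

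It remains to justify the local bound $f_d(G,v)\ge \ecc(v)+1$, and here I would lean on the closed-set machinery. By Theorem \ref{closedthm}, a game started at $v$ ends with a visited set $U$ that contains some closed set $C$. The engine is the elementary observation that any closed set $C$ satisfies $|C|\ge \ecc(u)+1$ for \emph{every} $u\in C$: by Definition \ref{closeddef} the set of distances $\{\dist(u,x):x\in C\}$ must contain $\{\dist(u,y):y\in V\}$, and in a connected graph, following a shortest path from $u$ to a farthest vertex shows the latter is exactly $\{0,1,\dots,\ecc(u)\}$. Thus $C$ contains a vertex at each of these $\ecc(u)+1$ pairwise distinct distances from $u$, forcing $|C|\ge \ecc(u)+1$.

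The remaining, and main, obstacle is that the closed set $C$ in which the Director traps the token need not contain the start vertex $v$: the previous paragraph only yields $|U|\ge \ecc(u)+1$ for $u\in C$, and the eccentricities of the trapped vertices could be as small as $\mathrm{rad}(G)$, giving merely $f_d(G,v)\ge \mathrm{rad}(G)+1$ rather than the sharper $\ecc(v)+1$. To close this gap I would produce an explicit Explorer strategy from $v$ that forbids the Director from confining the token to any closed set of size at most $\ecc(v)$ until $\ecc(v)+1$ distinct vertices have been seen; concretely, I would have the Explorer repeatedly call a distance guaranteed to place the token at a distance from $v$ not yet realized by a visited vertex, thereby enlarging the set of realized distances-to-$v$ monotonically up to its maximum $\ecc(v)$. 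The delicate point—and the crux of the whole argument—is that the Explorer's calls are measured from the token's \emph{current} position rather than from $v$, so a naive ``call the missing distance'' strategy can overshoot or be deflected; making this control precise (equivalently, showing that every closed set reachable from $v$ without visiting $\ecc(v)+1$ vertices must itself contain a vertex at distance $\ecc(v)$ from $v$) is exactly where the real work lies, and is the content supplied by \cite{ED}.
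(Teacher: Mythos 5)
Your outer skeleton is sound. Since the Director's legal responses depend only on the token's current vertex, a fresh-game Explorer strategy from $a$ can indeed be replayed once the token first reaches $a \in A$, so the suffix of the play visits at least $f_d(G,a)$ distinct vertices and $\max_{w} f_d(G,w) \ge f_d(G,s) \ge f_d(G,a) \ge \min_{v\in A}\ecc(v)+1$ follows \emph{granted} the local bound. One caveat: your stated justification, that ``the value of the game and the optimal moves depend only on the current position of the token, not on the history,'' is false as written — the payoff counts distinct visited vertices, so optimal play certainly depends on the visited set; what you need, and what is true, is only that the \emph{legal moves} are position-dependent, so the replayed strategy's guarantee transfers to the suffix. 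Your closed-set inequality is also correct: by Definition \ref{closeddef}, for any $u$ in a closed set $C$ the distances $\{\dist(u,x): x\in C\}$ must cover $\{0,1,\dots,\ecc(u)\}$ (realized along a geodesic to an eccentric vertex), forcing $|C| \ge \ecc(u)+1$.

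The genuine gap is the one you name yourself: the entire argument rests on $f_d(G,a)\ge \ecc(a)+1$, and you do not prove it — your final paragraph concedes that making the Explorer's control precise ``is exactly where the real work lies, and is the content supplied by \cite{ED}.'' Deferring the crux of the statement to the very reference being proved makes this a reduction, not a proof. Worse, the reduction targets something strictly stronger than the corollary asserts: since the start vertex counts as visited, the hypothesis is satisfied with $A$ containing an arbitrary start $s$, so your route requires the per-vertex bound $f_d(G,s)\ge\ecc(s)+1$ at essentially every vertex, whereas the corollary only bounds $\max_{v} f_d(G,v)$ — it is compatible with the corollary that the trapping closed set avoids $s$ entirely and $f_d(G,s)\le \ecc(s)$, exactly the scenario you flag, and Theorems \ref{closedthm} and \ref{closedbound} alone yield only $\mathrm{rad}(G)+1$. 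Your sketched repair (repeatedly call a distance forcing a new distance-to-$v$) fails for the reason you identify: a call of $d$ from the current vertex $u$ may be answered by any vertex whose distance to $v$ lies anywhere in $[\,|\dist(v,u)-d|,\ \dist(v,u)+d\,]$, so the Director can deflect onto already-realized distances. For the record, this paper offers no proof to compare against — Corollary \ref{ecc} is imported from \cite{ED} without argument — but as a standalone attempt your proposal proves the easy half and leaves the decisive step unestablished.
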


As this paper focuses on comparing $f_d(G,v)$ and $f_p(G,v)$, we introduce an analogous concept of Definition \ref{closeddef} for the path variant.

\begin{defn}\label{pathcloseddef}
On a graph $G=(V,E)$, we define a non-empty subset $U \subseteq V$ to be \emph{path closed} when every vertex $u \in U$ has the property that for every positive integer $\ell$, if there is a vertex $v \in V(G)$ such that there is a $uv$ path of length $\ell$ then there must be an $x \in U$ such that there is a $ux$ path of length $\ell$.
\end{defn}

\section{Path Variant on Bipanpositionable Graphs}\label{bipan}
The notion of pancyclic graphs, graphs containing every possible cycle as a subgraph, was first introduced in 1971 by Bondy and has been widely studied since \cite{bondy}. Here we focus on a related definition for bipartite graphs. As bipartite graphs, by definition, do not contain odd cycles they cannot be pancyclic. However, a related notion of bipancyclicity was defined by Mitchem and Schmeichel in 1982 \cite{bipan}.
\begin{defn}\cite{bipan}\label{bipancycdef}
    We say a graph $G$ is \emph{bipancyclical} if $G$ contains every even cycle of length $4$ to $|V(G)|$ as a subgraph.
\end{defn}
Later, in 2006, Kao introduced the related notions of panpositionable and bipanpositionable graphs \cite{kao}. We will be focused only on bipartite graphs and thus only give the definition for bipanpositionable graphs below.
\begin{defn}\cite{kao}\label{bipanposdef}
    A hamiltonian bipartite graph $G$ is \emph{bipanpositionable} if for any two distinct vertices $x,y \in V(G)$ and for any integer $k$ with $\dist_G(x,y) \le k \le \frac{|V(G)|}{2}$ and $(k-d_G(x,y))$ even, there exists a hamiltonian cycle $C$ of $G$ such that $\dist_C(x,y)=k$.
\end{defn}
In \cite{kao} Kao proved that every hypercube $Q_n$ for $n \ge 2$ is bipanpositionable. In fact, Shih et. al. in \cite{shih} showed that hypercubes satisfy an even stronger property and are bipanpositionable bipancyclic. However, as we do not require this stronger property for Theorem \ref{path-hyper} we omit its definition here. 

\subsection{Results}
In this section, we explore the path parameter on hypercubes. While Theorem \ref{path-hyper} applies to all bipanpositionable  graphs it is worth noting that the primary bipanpositionable graphs in the literature are hypercubes.

\begin{thm}\label{path-hyper}
    If a graph $G$ is bipanpositionable with $4$ or more vertices then $f_p(G,v)$ = $4$ for every $v \in V(G)$.
\end{thm}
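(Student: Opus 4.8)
The plan is to prove the two inequalities $f_p(G,v)\le 4$ and $f_p(G,v)\ge 4$ separately, and the engine driving both is a clean description of which path lengths are realizable in a bipanpositionable graph. Write $n=|V(G)|$ and fix the bipartition $V(G)=A\cup B$; since $G$ is hamiltonian bipartite we have $|A|=|B|=n/2$ and every vertex has degree at least $2$, so every vertex has a neighbor and a vertex at distance exactly $2$.

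The key lemma I would establish first is a reachability statement: for distinct $u,w$ with $d_G(u,w)=d$, there is a $uw$ path of length exactly $l$ whenever $d\le l\le n-d$ and $l\equiv d\pmod 2$. This follows directly from bipanpositionability: for $l\le n/2$ apply the definition with $k=l$ to obtain a hamiltonian cycle $C$ with $d_C(u,w)=l$, whose short arc is the desired path; for $l>n/2$ apply it with $k=n-l\le n/2$ (valid since $l\le n-d$, and $n$ even forces $n-l\equiv l\equiv d$), and take the long arc of length $l$. The payoff is that a single neighbor of $u$ is reachable by every odd length in $[1,n-1]$ and a single distance-$2$ vertex is reachable by every even length in $[2,n-2]$; since any path has length at most $n-1$, these two ranges exhaust all realizable lengths. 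Consequently a set $U$ is path closed exactly when every $u\in U$ has both a neighbor in $U$ (covering all odd calls) and a vertex at distance $2$ in $U$ (covering all even calls).

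For the upper bound I would produce a path-closed set of size $4$ containing $v$. Using the reduction above, take two distinct neighbors $b_1,b_2$ of $v$ (they exist as $\deg v\ge 2$) and a neighbor $a_2\ne v$ of $b_1$, and set $U=\{v,a_2,b_1,b_2\}$. Then $v$ and $a_2$ share the neighbor $b_1$, so $d_G(v,a_2)=2$, while $b_1$ and $b_2$ share the neighbor $v$, so $d_G(b_1,b_2)=2$; one checks that each of the four (distinct, since the two pairs lie in opposite parts) vertices has both a neighbor and a distance-$2$ partner inside $U$, so $U$ is path closed. The Director then confines the token to $U$: at any position $u\in U$ and any valid call $l$, path-closedness guarantees a legal move back into $U$, so no vertex outside $U$ is ever visited and $f_p(G,v)\le 4$.

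For the lower bound I would give an explicit Explorer strategy forcing four distinct vertices, exploiting that the parity of a move is dictated by the bipartition. Starting at $v\in A$: call $l=2$, forcing the token to a new vertex $w_1\in A$ at distance $2$; then call $l=1$ from $w_1$, forcing a vertex $u_1\in B$, necessarily new since no vertex of $B$ has yet been visited; then call $l=2$ from $u_1$, forcing a vertex of $B\setminus\{u_1\}$, again new for the same reason. This visits four distinct vertices regardless of the Director's choices, so $f_p(G,v)\ge 4$. I expect the reachability lemma to be the main obstacle, as it is where the full strength of bipanpositionability is converted into usable information about path lengths; once the parity-and-range description is in hand, the reduction of path-closedness to the two local conditions and both player strategies are short.
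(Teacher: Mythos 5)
Your proposal is correct, and while it runs on the same engine as the paper's proof---converting bipanpositionability into path lengths by taking a hamiltonian cycle $C$ with $d_C(u,w)=k$ and using its two arcs to realize lengths $k$ and $n-k$---it is organized quite differently, and in a few places more cleanly. The paper first constructs an explicit $C_4$ through the starting vertex (a cycle with $d_C(a,b)=3$ plus the edge $ab$) and verifies closure for that 4-cycle directly; you instead prove a general reachability lemma ($u,w$ at distance $d$ are joined by a path of every length $l$ with $d\le l\le n-d$ and $l\equiv d \pmod 2$), from which you extract the clean structural fact that a set is path closed exactly when each of its vertices has both a neighbor and a distance-$2$ vertex inside the set. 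This buys you a more flexible closed set: your $\{v,a_2,b_1,b_2\}$ is only a $P_4$-shaped configuration ($a_2$ and $b_2$ need not be adjacent), whereas the paper's set induces a $C_4$; your characterization makes verifying closure trivial, and it also makes explicit the parity bookkeeping (even realizable lengths cap at $n-2$ since $n$ is even) that the paper handles implicitly inside its case analysis. Your lower bound is also genuinely different and shorter: the paper calls $1,2,1$ and then splits into cases according to whether the token returns to the start, while you call $2,1,2$ and conclude immediately from the bipartition parity---after the length-$1$ move the token is in the previously untouched part, and the final length-$2$ move must land in that part at a vertex other than the one just visited, with no cases needed. One small point worth a line in a final write-up: your ``exactly when'' characterization's forward direction uses that $l=1$ and $l=2$ are always valid calls (every vertex of a hamiltonian graph on at least $4$ vertices has a neighbor and, by parity, a vertex at distance exactly $2$), but you effectively note this and it is only the reverse direction that the upper bound needs. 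Net assessment: both proofs are sound; yours is more modular and would generalize more readily, while the paper's gives the slightly stronger structural remark that the Director can confine the token to an induced $4$-cycle.
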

\begin{proof}
   Let $G$ be a bipanpositionable graph with at least four vertices and suppose the starting vertex of the game is $a$. First we will show there exists a $C_4$ subgraph of $G$ containing $a$. Note that the only bipartite hamiltonian graph on four vertices is $C_4$, so we may now assume $|V(G)|\ge 6$. Let $b$ be an arbitrary neighbor of $a$. Since $G$ is bipanpositionable with $6$ or more vertices, we know there is a hamiltonian cycle $C$ of $G$ such that $\dist_C(a,b)=3$. Taking the length $3$ $ab$ path on $C$ together with the $ab$ edge in $G$ forms a $C_4$ containing $a$. In this $C_4$ subgraph let $b$ and $c$ be the two neighbors of $a$ and $d$ the remaining vertex. Note that as $G$ is bipartite, a necessary condition to being bipanpositionable, we know that neither the edge $ad$ nor the edge $bc$ can exist. Thus $\dist_G(a,d)=\dist_G(b,c)=2$. We claim that in playing the path variant of the Explorer-Director game the director can indefinitely keep the token on $W=\{a,b,c,d\}$.
   
 Since $W$ induces a $C_4$ in $G$ we clearly know that if the Director calls a path length of $1$ or $2$ the Director can simply move the token from any vertex in $W$ to another vertex in $W$. We will now show that for any path length $\ell$ from $3$ to $|V(G)|-1$ if the token is on vertex $a$ the Director can ensure the token moves to another vertex in $W$ after the Explorer calls $\ell$. This argument will hold identically if the token is on $b, c, $ or $d$ thus proving that $f_p(G,v) \le 4$ for any $v\in V(G)$.

 First consider the case where $\ell$ is even. The largest possible even value of $l$ that the Explorer could choose is $|V(G)| - 2$, as $|V(G)|$ must be even since $G$ is a hamiltonian bipartite graph. Note since $G$ is bipanpositionable we know that for any even $\ell$ with $2\le \ell \le \frac{|V(G)|}{2}$ there is a hamiltonian cycle, $C$, where $\dist_C(a,d)=\ell$. We then have paths between $a$ and $d$ along $C$ of both length $\ell$ and $|V(G)|-\ell$. Thus we can clearly find a path of any even length between $a$ and $d$.

 In the case where $\ell$ is odd we similarly have, by the fact that $G$ is bipanpositionable, that for any odd $\ell$ between $1$ and $\frac{|V(G)|}{2}$ there is a hamiltonian cycle $C$ where $\dist_C(a,b)=\ell$. Again this implies that we can find paths between $a$ and $b$ along $C$ of both length $\ell$ and $|V(G)|-\ell$. Thus we can clearly find a path of any odd length between $a$ and $c$. Thus we have proved that $W$ is a path closed set, implying $f_p(G) \le 4$. 

 For the lower bound we note that the Director cannot restrict the token to a set with less than $4$ vertices. As $G$ is bipartite the smallest possible cycle length is $4$ and thus if the Explorer calls a path length of 1 or 2 the Director's options, under the rules of the path variant, are identical to those under the distance variant. Note that if the Explorer begins by calling a path length of $1$ the token must move to a vertex $u$ adjacent to $a$. If the Explorer next calls a path length of $2$ the token must again move to yet another vertex $v$. Now we simply must show that the Explorer can force the token to another vertex. Let the Explorer call a path length of $1$ again. The token cannot move back to $u$ as $\dist(u,v)=2$. If the Director moves the token to a vertex other than $a$ we are done, so we may assume the token returns to $a$. This implies that $\dist(a,v)=1$ and thus there is not a path of length $2$ between $a$ and $u$ or $a$ and $v$. Therefore, if the Explorer calls $2$ once again the token will need to visit a fourth vertex as desired. 

    Since both the upper and lower bound collapse onto $4$, then we can conclude $f_p(G,v)=4$ for any bipanpositionable graph and any starting vertex $v$.
 
\end{proof}

\begin{cor} For any hypercube $Q_n$ and any starting vertex $v$ we have
    $f_p(Q_n,v) =
        \begin{cases} 
        1 & \text{if } n = 0 \\
        2 & \text{if } n = 1 \\
        4 & \text{if } n \geq 2.\\
        \end{cases}$
\end{cor}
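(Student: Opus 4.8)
The plan is to prove the three cases of the corollary separately, observing first that since $Q_n$ is vertex-transitive the quantity $f_p(Q_n,v)$ does not depend on the choice of starting vertex, which justifies writing $f_p(Q_n)$ without reference to $v$. The whole result then reduces to combining a direct analysis of the two degenerate cases with a citation-plus-Theorem-\ref{path-hyper} argument for the main case.

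For the two base cases I would argue by hand, since these are precisely the hypercubes too small to be bipanpositionable. When $n=0$, the graph $Q_0$ is a single vertex, so the token never moves and only the starting vertex is counted, giving $f_p(Q_0)=1$. When $n=1$, the graph $Q_1$ is a single edge on two vertices; the Explorer can call a path length of $1$ to force the token across the edge, and no third vertex exists to visit, so $f_p(Q_1)=2$. Neither $Q_0$ nor $Q_1$ is hamiltonian, so neither falls under the hypothesis of Definition \ref{bipanposdef}; this is why they must be treated outside the scope of Theorem \ref{path-hyper}.

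For the principal case $n\ge 2$, I would invoke Kao's result \cite{kao} that every hypercube $Q_n$ with $n\ge 2$ is bipanpositionable, together with the observation that $|V(Q_n)|=2^n\ge 4$ for $n\ge 2$. With both hypotheses of Theorem \ref{path-hyper} verified, that theorem immediately yields $f_p(Q_n,v)=4$ for every vertex $v$, hence $f_p(Q_n)=4$. Assembling the three cases gives the stated piecewise formula.

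I do not anticipate a genuine obstacle here, as the corollary is essentially a specialization of Theorem \ref{path-hyper} to a family already known to be bipanpositionable. The only point requiring care is that the theorem does not cover $n\in\{0,1\}$, where $Q_n$ is not hamiltonian and therefore not bipanpositionable; this is exactly why those two values must be checked directly rather than inherited from the general statement. A secondary bit of care is to note explicitly the vertex-transitivity of $Q_n$ so that suppressing the starting vertex in the notation $f_p(Q_n)$ is justified.
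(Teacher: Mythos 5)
Your proof is correct and takes essentially the same route as the paper's: handle $n=0$ and $n=1$ by direct inspection (noting they fall outside the bipanpositionable hypothesis), then cite Kao \cite{kao} for bipanpositionability of $Q_n$ with $n\ge 2$ and apply Theorem \ref{path-hyper}. Your explicit remarks on vertex-transitivity and on $|V(Q_n)|=2^n\ge 4$ are small checks the paper leaves implicit, but they do not constitute a different argument.
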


\begin{proof}
    If $n$ is equal to $0$, then there is only one vertex in the graph. Thus, $f_p(Q_0,v)=1$. Similarly $Q_1 \cong K_2$, so clearly $f_p(Q_1,v)=2$. 
As proved in \cite{kao} hypercubes where $n$ is at least $2$ are bipanpositionable. Therefore, Theorem \ref{path-hyper} immediately implies that $f_p(Q_n,v)=4$ for $n \geq 2$. 
\end{proof}

\section{Distance Variant on Hypercubes}\label{HC}
In this section we use the standard representation of the $2^n$ vertices in $Q_n$ via unique binary strings of length $n$. Recall that for $u, v \in V(Q_n)$ an edge $uv$ in $Q_n$ exists exactly when $u$ and $v$ have exactly one bit which differs in their binary string notation. If $v\in V(Q_n)$ we will let $v_i$ denote the $i$th bit of $v$ in its binary representation. We will let the \emph{weight} of a vertex refer to the number of 1's in the binary representation. 

It is not hard to check that hypercubes are vertex-transitive. Thus the starting vertex in the Explorer-Director game is irrelevant for hypercubes, i.e. $f_d(Q_n, v)=f_d(Q_n,u)$ for any two vertices $u$ and $v$ in $V(Q_n)$. Therefore, for the remainder of this section we will just write $f_d(Q_n)$ for $f_d(Q_n,v)$ and assume the starting vertex is always $\overline{0}$, the vertex of weight $0$. Thus Theorem \ref{closedbound} immediately implies that $f_d(Q_n)$ is equal to the cardinality of a minimum closed set in $G$.
 Note that the eccentricity of every vertex in $Q_n$ is simply $n$, and for every vertex $v$ there is a unique corresponding vertex $u$ of distance $n$ from $v$. 
Thus Corollary \ref{ecc} immediately implies that $f_d(Q_n) \ge n+1$. 

Note that the distance between any two vertices in a hypercube is simply the hamming distance between their binary representations, i.e. the number of locations in which their bits differ. Thus it is not too hard to check the following fact, but we will include a proof for completeness. 
\begin{fact}\label{sum-even}
    Given vertices $u,v,w \in V(Q_n)$ we have $\dist(u,v) + \dist(v,w) + \dist(w,u)$ must be even.
\end{fact}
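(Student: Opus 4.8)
The plan is to reduce the claim to a coordinate-by-coordinate parity check, exploiting the fact recalled just above that in $Q_n$ the distance between two vertices equals the Hamming distance between their binary strings. First I would write, for each pair of vertices, $\dist(u,v) = \sum_{i=1}^{n} [u_i \neq v_i]$, where $[u_i \neq v_i]$ denotes $1$ if the $i$th bits differ and $0$ otherwise. Substituting this expression into $\dist(u,v)+\dist(v,w)+\dist(w,u)$ and interchanging the order of summation, I would group the contributions by coordinate to obtain
\[
\dist(u,v)+\dist(v,w)+\dist(w,u) = \sum_{i=1}^{n} \bigl( [u_i \neq v_i] + [v_i \neq w_i] + [w_i \neq u_i] \bigr).
\]

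The heart of the argument is then a finite check at a single coordinate. Fixing an index $i$, the bits $u_i, v_i, w_i$ each lie in $\{0,1\}$, so there are only two cases up to symmetry: either all three bits agree, in which case every indicator is $0$ and the inner sum is $0$; or exactly two of the three bits agree, in which case exactly two of the three indicators equal $1$ and the inner sum is $2$. In either case the per-coordinate contribution is even.

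Finally, since a sum of even integers is even, summing these per-coordinate contributions over all $n$ coordinates yields an even total, which is exactly $\dist(u,v)+\dist(v,w)+\dist(w,u)$, as desired. I do not anticipate a genuine obstacle here; the statement is essentially the observation that in the bipartite graph $Q_n$ any closed walk has even length, but the coordinate-wise computation makes this completely explicit. The only point requiring minor care is stating the case split on the three bits cleanly, so that the reader sees the inner sum is always $0$ or $2$ rather than merely asserting parity.
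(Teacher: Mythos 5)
Your proof is correct and takes essentially the same approach as the paper's: both decompose $\dist(u,v)+\dist(v,w)+\dist(w,u)$ coordinate-by-coordinate and check that each bit contributes $0$ (all three bits agree) or $2$ (exactly two agree), so the total is even. Your indicator-sum notation merely formalizes the paper's bit-by-bit accounting; nothing needs to change.
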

\begin{proof}
We will consider the distances one bit at a time. For a given bit suppose all three vertices have the same value. Then that bit contributes 0 to the total distance sum. Now, suppose two of the vertices have the same value and the other does not. Without loss of generality, say the two vertices with the same digit value are $u$ and $v$ while $w$ has a different digit in that bit. In this case, this bit will contribute 1 to each of $\dist(w,u)$ and $\dist(v,w)$ and 0 to $\dist(u,v)$. Note that as there are only two options for each bit these are the only two possible scenarios. Thus each bit contributes an even number, either zero or two, to $\dist(u,v) + \dist(v,w) + \dist(w,u)$, so $\dist(u,v) + \dist(v,w) + \dist(w,u)$ must be even.
\end{proof}

Furthermore, note that the fact that each vertex has a unique vertex  distance $n$ away implies the following. 

\begin{lemma}\label{fd-even}
    For any hypercube $Q_n$ we have that $f_d(Q_n)$ is always even.
\end{lemma}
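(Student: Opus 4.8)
The plan is to combine Theorem \ref{closedbound}, which guarantees that $f_d(Q_n)$ equals the cardinality of a minimum closed set, with the antipodal structure of the hypercube that was just highlighted. For $n \ge 1$, every vertex $u \in V(Q_n)$ has a \emph{unique} vertex at distance $n$, namely its bitwise complement $\overline{u}$, and the map $u \mapsto \overline{u}$ is a fixed-point-free involution on $V(Q_n)$ since $\dist(u,\overline{u}) = n > 0$. The strategy is to show that every closed set is invariant under this involution; once that is known, the involution partitions any closed set into two-element blocks $\{u,\overline{u}\}$, which forces its cardinality to be even.

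The key step is establishing antipodal invariance. Let $U$ be any closed set and fix an arbitrary $u \in U$. Applying Definition \ref{closeddef} to $u$ with the choice $v = \overline{u}$, there must exist some $x \in U$ with $\dist(u,x) = \dist(u,\overline{u}) = n$. Because $\overline{u}$ is the only vertex at distance $n$ from $u$, we conclude $x = \overline{u}$, and hence $\overline{u} \in U$. Thus $U$ is closed under complementation.

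Having shown this, the conclusion is immediate: since complementation is a fixed-point-free involution, it pairs the elements of $U$, so $|U|$ is even for \emph{every} closed set $U$, and in particular for a minimum one. By Theorem \ref{closedbound} this minimum cardinality is exactly $f_d(Q_n)$, giving the result. I expect the antipodal-invariance step to be the only one requiring genuine care, as it is where the uniqueness of the distance-$n$ vertex is essential; the remaining argument is bookkeeping. The one point to flag is the boundary case $n = 0$, where $Q_0$ is a single vertex and $f_d(Q_0) = 1$ is odd, so the statement should be read as applying for $n \ge 1$, precisely the range in which the complement $\overline{u}$ is distinct from $u$.
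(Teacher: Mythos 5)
Your proof is correct and follows essentially the same route as the paper: combining Theorem \ref{closedbound} with the uniqueness of the distance-$n$ vertex to show every closed set is paired off by the antipodal involution, hence has even cardinality. Your version is a bit more explicit than the paper's (which states the pairing argument in contrapositive form), and your flag of the degenerate case $n = 0$, where $f_d(Q_0) = 1$ is odd, is a legitimate caveat the paper leaves implicit.
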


\begin{proof}
Recall that by Theorem \ref{closedbound} we know $f_d(Q_n)$ is the cardinality of a minimum closed set in $Q_n$. Since all vertices in $Q_n$ have a unique vertex of distance $n$ away we claim that every closed set of $Q_n$ must have even cardinality. Specifically, if $W$ was a closed set in $Q_n$ such that $|W|$ was odd then there would be at least one vertex $w \in W$ such that there is no vertex in $W$ of distance $n$ from $w$. Thus $W$ is not a closed set, and we know $f_d(Q_n)$ is always even.
\end{proof}

We are able to use Lemma \ref{fd-even} to prove the following two lemmas which each give upper bounds on $f_d(Q_n)$. Note that when $n$ is not a power of 2 the upper bounds in Lemmas \ref{H-UB1} and \ref{H-UB2} are useful in different scenarios.
\begin{lemma}\label{H-UB1}
    If $n$ is not a power of $2$, then 
    \begin{equation*}
        f_d(Q_n) \leq \left(\frac{p-1}{p}\right)2n, \\ \text{ where p is the smallest odd prime factor of n.}
    \end{equation*}
    If $n$ is a power of 2, we have
    \begin{equation*}
        f_d(Q_{n}) \leq 2n.
    \end{equation*}
\end{lemma}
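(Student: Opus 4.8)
The plan is to embed a cycle $C_{2n}$ isometrically into $Q_n$ and reduce the problem to the cycle game already solved in Theorem~\ref{NMthm}. Concretely, I would take the closed walk $w_0, w_1, \ldots, w_{2n-1}$ that flips the bits $1, 2, \ldots, n$ in order to travel from $\overline{0}$ to $\overline{1}$ and then flips them again in the same order to return, so that
\[
w_i = \begin{cases} 1^{i}\,0^{\,n-i} & 0 \le i \le n,\\[2pt] 0^{\,i-n}\,1^{\,2n-i} & n \le i \le 2n. \end{cases}
\]
Consecutive vertices differ in exactly one bit, so these $2n$ vertices induce a cycle $C$ of length $2n$ in $Q_n$, with $w_{i+n}$ the bit-complement (hence the $Q_n$-antipode) of $w_i$.

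The key step is to show this embedding is \emph{isometric}: for all $i,j$ the Hamming distance $\dist_{Q_n}(w_i,w_j)$ equals the cycle distance $\dist_C(w_i,w_j)=\min(|i-j|,\,2n-|i-j|)$. I would verify this by a short case analysis on the positions of $i,j$ relative to $n$. When $i,j$ lie on the same arc (both $\le n$ or both $\ge n$), the vertices are nested blocks of ones and the Hamming distance is simply $|i-j|$, which matches the cycle distance since $|i-j|\le n$. The only substantive case is $i \le n \le j$; writing $j=n+s$, a direct count of the disagreeing bits gives $\dist_{Q_n}(w_i,w_j)=n-|i-s|$, and one checks this equals $\min(|i-j|,\,2n-|i-j|)$. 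This cross-arc computation is the main obstacle, since it is the only place where the Hamming metric and the cyclic metric could a priori disagree; the rest of the argument follows once it is in hand.

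Granting the isometry, I would transfer the cycle solution to $Q_n$. From any $w_i$ the distances to the other cycle vertices realize exactly $\{0,1,\ldots,n\}$, which is precisely the full set of distances available from $w_i$ in $Q_n$; consequently a subset $U \subseteq V(C)$ that is closed in $C_{2n}$ is automatically closed in $Q_n$. Taking $U$ to be a minimum closed set of $C_{2n}$ therefore yields a closed set of $Q_n$ of the same cardinality, so Theorem~\ref{closedbound} gives $f_d(Q_n) \le f_d(C_{2n})$. (Equivalently, the Director may confine the token to $C$ and answer every called distance $d \in \{1,\ldots,n\}$ by the optimal cycle strategy, which the isometry renders a legal move in $Q_n$.)

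Finally I would apply Theorem~\ref{NMthm} to $C_{2n}$. Since $2n$ is even, its smallest odd prime factor coincides with the smallest odd prime factor $p$ of $n$, and $2n$ is a power of $2$ exactly when $n$ is. Hence $f_d(C_{2n})=2n$ when $n$ is a power of $2$ and $f_d(C_{2n})=\tfrac{p-1}{p}\,2n$ otherwise, which together with $f_d(Q_n)\le f_d(C_{2n})$ yields both claimed bounds. As a consistency check, each of these values is even, in agreement with Lemma~\ref{fd-even}.
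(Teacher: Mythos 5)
Your proposal is correct and takes essentially the same approach as the paper: both construct the same $2n$-cycle of nested blocks of ones, use its isometric embedding in $Q_n$ to conclude that a minimum closed set of $C_{2n}$ remains closed in $Q_n$, and then apply Theorem~\ref{NMthm} with cycle length $2n$. The only difference is that you verify the isometry explicitly (the cross-arc Hamming count $n-|i-s|$), a step the paper asserts directly from the construction.
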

\begin{proof}
    Given any hypercube $Q_n$, it is possible to create a cycle with $2n$ vertices such that between two vertices, $u$ and $v$, the distance on the graph is equal to the distance on the cycle. For instance, take the cycle formed by $W \cup U$ where
    \begin{align*}
       W &= \{v \in V(Q_n): \text{for some }0 \le j \le n \text{ we have } v_i = 0 \text{ for }i\le j\text{ and } v_i = 1 \text{ else}\} \\
       U &= \{v \in V(Q_n): \text{for some }0 \le j \le n \text{ we have } v_i = 1 \text{ for }i\le j\text{ and } v_i = 0 \text{ else}\}.
    \end{align*}
 For example, in $Q_3$ we have $W\cup U=\{000,001,011,111,110,100\}$. Due to the manner in which this cycle is constructed, the distance between vertices in the cycle is equal to the distance between the vertices in the graph. The vertices in this cycle is form a closed set since for any possible distance on the hypercube, there exists another vertex on the cycle of that distance away. Since the Director can restrict the token to a subgraph which is isomorphic to $C_{2n}$ our result follows immediately from Theorem \ref{NMthm}.
\end{proof}
We can further generalize the powers of 2 bound from Lemma \ref{H-UB1}. 
\begin{lemma}\label{H-UB2}
  For any hypercube $Q_n$ and integer $x$ satisfying $\floor{\frac{n}{2}} \leq x < n$ we have $f_d(Q_n) \leq 2f_d(Q_x)$. 
\end{lemma}

\begin{proof}
    Let $S$ be a closed set of $Q_{x}$. We will let $0_i$ and $1_i$ denote the binary string of length i with all 0's or all 1's, respectively. Then, consider the following sets of vertices in $Q_n$.
    \begin{itemize}
        \item $A=\{(0_{n-x},s): s \in S\}$ 
        \item $B=\{(1_{n-x},s): s \in S\}$ 
    \end{itemize}
    We claim $C= A \cup B$ is a closed set of $Q_n$. Suppose the token is on any vertex in $C$. Without loss of generality, we will say the token is on $(0_{n-x},s)$ for some $s \in S$. If the Explorer selects any distance from $1$ to $x$ inclusively, we can guarantee there is another vertex in set $A$ of the given distance since for all vertices in set $S$, there exist another vertex of any distance $1$ to $x$ away in $S$. If the Explorer selects a distance, $d$, greater than $x$, we can first move a distance of $n-x$ from $(0_{n-x},s)$ to $(1_{n-x},s)$. Then as $\floor{\frac{n}{2}} \leq x < n$ we know $1 \le d-(n-x) \le x$. From $(1_{n-x},s)$, there exists a  vertex of distance $d-(n-x)$ away from it in set $B$. Since the token moved the by changing the first $n-x$ digits to move between set $A$ and $B$, and we change the latter $n$ digits when moving within set $B$, the total distance between the initial and final vertex is equal to the distance it takes to switch sets plus the distance traveled in the $B$ as desired.
\end{proof}

Next we will show three lower bounds which hold for various families of hypercubes. For some specific families of hypercubes these lower bounds combined with the upper bounds above yield exact results. 

\begin{lemma}\label{1mod4-LB}
    For all integers $k \geq 1$ we have $f_d(Q_{4k+1}) \geq 4k+4$.
\end{lemma}
\begin{proof}
Again Theorem \ref{closedbound} gives that $f_d(Q_{4k+1})$ must be the cardinality of a minimum closed set in $Q_{4k+1}$. Recall that Lemma \ref{fd-even} states that $f_d(Q_{4k+1})$ must be even. Thus, it only remains to show that there is no closed set in $Q_{4k+1}$ of size at most $4k+2$. Let $W$ be a closed set of $Q_{4k+1}$. Without loss of generality, assume $\overline{0}\in W$. Thus $W$ must also contain a vertex of distance $i$ from $\overline{0}$ for every $1\le i \le 4k+1$. We will identify one vertex of each distance from $\overline{0}$ in $W$ and label them $w^i$ where $i$ denotes $\dist(\overline{0},w^i)$, i.e. the number of ones in the binary representation of $w^i$. Recall we also refer to the number of ones in the binary representation of a vertex as the weight of that vertex. Thus, we know $|W| \ge 4k+2$.

We will assume, for the sake of contradiction, that $|W|=4k+2$. This immediately implies that for every $w \in W$ and every distance $1\le i\le 4k+1$ there is a unique vertex $v \in W$ where $\dist(w,v)=i$. We will show that this is, in fact, impossible. Note that for any $w^x\in W$ of even weight $x$, we have $\dist(w^2, w^x)\in \{x-2, x, x+2\}$. We first consider $\dist(w^2,w^4)$. We know $\dist(\overline{0},w^2)=2$ and $\dist(\overline{0},w^4)=4$ so $w^2$ cannot have another vertex in $W$ of distance $2$ and $w^4$ cannot have another vertex in $W$ of distance $4$. Thus, we must have $\dist(w^2, w^4)=6$. Note that if $k=1$ then this immediately gives a contradiction as every pair of vertices in $Q_5$ are distance at most $5$ apart, and thus either $w^2$ has two vertices of distance 2 away in $W$ or $w^4$ has two vertices of distance $4$ away in $W$, as desired.

Now assume $k \ge 2$. We claim that for all $1\le \ell <k$ that $\dist(w^2, w^{4l})=4\ell+2$. We will prove this statement by induction. We showed in the prior paragraph that for $k \ge 2$ we must have $\dist(w^2, w^4)=6$. Assume for induction that for some $l$ satisfying $1\le \ell <k$ we have $\dist(w^2, w^{4\ell})=4\ell+2$. Now we consider $\dist(w^2, w^{4\ell+4})$. As above $\dist(w^2, w^{4\ell+4})\in \{4\ell+2, 4\ell+4, 4\ell+6\}$. However, we know $\dist(\overline{0}, w^{4\ell+4})=4\ell+4$ and, by our inductive hypothesis, that $\dist(w^2, w^{4\ell})=4\ell+2$. Thus in order to avoid a repeated distance for both $w^2$  and $w^{4l+4}$ in $W$ we must have $\dist(w^2, w^{4\ell+4})=4\ell+6$, as desired. 

Now we consider $w^{4k}$. Note that as every pair of vertices in $Q_{4k+1}$ has distance at most $4k+1$, we know $\dist(w^2, w^{4k}) \neq 4k+2$. Therefore $\dist(w^2, w^{4k})\in \{4k-2, 4k\}$. However we know $\dist(\overline{0},w^{4k})=4k$ and $\dist(w^{2},w^{4k-4}) =4k-2$. Therefore, there must be a vertex $w \in W$ and an even distance $i$ such that there are two vertices in $W$ of distance $i$ from $w$. This contradicts our assumption that $f_d(Q_{4k+1})=4k+2$, as desired.
\end{proof}

We now consider a similar lower bound for hypercubes of dimension exactly a multiple of 4.

\begin{lemma}\label{0mod4-LB}
    $f_d(Q_{4k}) \geq 4k+4$ for all integers $k \geq 1$.
\end{lemma}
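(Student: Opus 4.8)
The plan is to mirror the strategy of Lemma \ref{1mod4-LB}: by Theorem \ref{closedbound}, $f_d(Q_{4k})$ equals the cardinality of a minimum closed set, so it suffices to show that every closed set $W$ of $Q_{4k}$ satisfies $|W| \ge 4k+4$. By vertex-transitivity I may assume $\overline{0} \in W$, and I would split $W$ into its even-weight part $W_e$ (which contains $\overline{0}$) and its odd-weight part $W_o$, exploiting that in $Q_{4k}$ the distance between two vertices has the same parity as the sum of their weights.

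First I would establish the structural fact that $W$ is closed under taking antipodes. Since $Q_{4k}$ has diameter $4k$ and each vertex $u$ has a unique vertex at distance $4k$ from it, namely its bitwise complement $\bar u$, Definition \ref{closeddef} forces $\bar u \in W$ whenever $u \in W$. Crucially, because $4k$ is even, $\bar u$ has the same weight-parity as $u$, so $W_e$ and $W_o$ are each closed under the antipodal map. As this map is a fixed-point-free involution (no vertex equals its complement), both $|W_e|$ and $|W_o|$ must be even. This per-parity evenness is the key feature distinguishing the $n \equiv 0 \pmod 4$ case, and it is exactly what the argument of Lemma \ref{1mod4-LB} could not exploit when $n$ was odd and the antipodal map flipped parity.

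Next I would produce two counting lower bounds. Applying closedness at $\overline{0}$, the partners required at the $2k+1$ even distances $0, 2, \ldots, 4k$ are distinct even-weight vertices, giving $|W_e| \ge 2k+1$. Picking any weight-$1$ vertex $w_1 \in W$ (which exists as the distance-$1$ partner of $\overline{0}$) and applying closedness at $w_1$, the partners required at its $2k+1$ even distances $0, 2, \ldots, 4k$ are distinct vertices of the same parity as $w_1$, i.e.\ odd-weight, giving $|W_o| \ge 2k+1$. Combining each bound with the evenness from the previous step upgrades them to $|W_e| \ge 2k+2$ and $|W_o| \ge 2k+2$, whence $|W| = |W_e| + |W_o| \ge 4k+4$, as desired.

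I expect the main obstacle to be isolating the right invariant: the naive counts only yield $|W| \ge 4k+2$, which is already even and therefore gives nothing beyond the generic eccentricity bound of Corollary \ref{ecc}. The improvement hinges on the observation that antipodal closure preserves weight-parity precisely when $4 \mid n$, so that $2k+1$ (an odd number) can be pushed up to $2k+2$ independently in each parity class. As a consistency check I would verify that this mechanism genuinely fails for $n \equiv 2 \pmod 4$, where $n/2 + 1$ is already even and no upgrade occurs, confirming that the hypothesis $n = 4k$ is essential rather than incidental.
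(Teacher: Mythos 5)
Your proof is correct, but it takes a genuinely different route from the paper's. The paper first collects the $4k+1$ vertices $\overline{0}, w_1, \ldots, w_{4k}$ (one per distance from $\overline{0}$), then manufactures two further vertices by separate arguments: the antipode $v$ of $w_{2k}$, which again has weight $2k$; and a pigeonhole vertex $a$ --- by Fact \ref{sum-even}, the $2k$ odd-weight vertices $w_1, w_3, \ldots, w_{4k-1}$ realize at most $2k-1$ of the $2k$ even distances $2, 4, \ldots, 4k$ from $w_1$, so closedness forces a new odd-weight vertex. That yields $|W| \ge 4k+3$, and the global evenness of Lemma \ref{fd-even} rounds this up to $4k+4$. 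You instead split $W$ by weight parity, obtain $|W_e|, |W_o| \ge 2k+1$ from closedness at $\overline{0}$ and at a weight-$1$ vertex $w_1$ respectively, and then observe that the antipodal map is a fixed-point-free involution that preserves $W$ (by Definition \ref{closeddef} and uniqueness of the distance-$4k$ vertex) and preserves weight parity (since $4k$ is even), so $|W_e|$ and $|W_o|$ are each even and each bound upgrades to $2k+2$. Your per-class parity argument does the work of both of the paper's extra vertices \emph{and} of Lemma \ref{fd-even} in a single stroke; it is cleaner, avoids the pigeonhole entirely, and makes transparent exactly where $4 \mid n$ enters (the per-class count $2k+1$ is odd). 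The paper's version, in exchange, exhibits concrete extra vertices beyond the $w_i$'s, which is closer in spirit to its proof of Lemma \ref{1mod4-LB}. One small correction to your closing remark: the antipodal map preserves weight parity whenever $n$ is even, not ``precisely when $4 \mid n$''; what fails for $n \equiv 2 \pmod 4$ is only the upgrade step, because there the per-class count $n/2+1$ is already even --- as you yourself correctly note in the same sentence, so this slip does not affect the proof of the lemma.
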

\begin{proof}
    As in the proof of Lemma \ref{1mod4-LB} we know that $f_d(Q_{4k})$ must be the cardinality of a minimum closed set in $Q_{4k}$ and that $f_d(Q_{4k})$ must be even. Thus it only remains to show that there is no closed set in $Q_{4k}$ of size at most $4k+2$. Again let $W$ be a minimum closed set in $Q_{4k}$, and without loss of generality assume $\overline{0}\in W$. Again, we will identify one vertex of each distance from $\overline{0}$ in $W$ and label them $w^i$ where $i$ denotes $\dist(\overline{0},w^i)$. Note that in the case of $w^{2k}$ the unique vertex, $v$, of distance $4k$ from $w^{2k}$ is also weight $2k$, so $W$ must contain at least two weight $2k$ vertices. 

    We know, by Fact \ref{sum-even}, that given any three vertices, the sum of the distances between them must be even. Suppose we only consider the odd weight vertices $\{w^1,w^3,w^5,w^7,\ldots,w^{4k-1}\}$. Since they are all of an odd distance from vertex $\overline{0}$, then the distance between any two odd weight vertices must be even. Similarly, we know that if $i$ is odd and $j$ is even then $\dist(w^i,w^j)$ must be odd. Note that $|\{w^1,w^3,w^5,w^7,\ldots,w^{4k-1}\}|=2k$. Thus, there must exist some even value $x$ between $2$ and $4k$ such that there is no $w^i$ with $\dist(w^1,w^i)=x$. Since $W$ is closed, it must contain a vertex $a$ which is distance $x$ from $w^1$. Thus, $W$ must contain at least $\{\overline{0},v,a\} \cup \{w^i: 1 \le i \le 4k\}$, which has cardinality $4k+3$ as desired.

\end{proof}

Finally, we will discuss an exact result for $f_d(Q_n)$ when $n$ is slightly less than a power of two. 

\begin{thm}
    $f_d(Q_{2^k-x}) = 2^k$ for $x\in \{1,2,3,4\}$, $k \ge 1$, and $2^k-x\ge 2^{k-1}$. 
\end{thm}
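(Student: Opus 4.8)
The plan is to combine the upper and lower bounds already established to sandwich $f_d(Q_{2^k-x})$ exactly at $2^k$. Writing $n = 2^k - x$ for $x \in \{1,2,3,4\}$ with $n \ge 2^{k-1}$, I first need to obtain the upper bound $f_d(Q_n) \le 2^k$. The natural tool is Lemma \ref{H-UB2}, which gives $f_d(Q_n) \le 2 f_d(Q_{x'})$ for any $x'$ with $\floor{n/2} \le x' < n$. Since $n = 2^k - x$ lies between $2^{k-1}$ and $2^k$, I would iterate this doubling: choosing $x' = 2^{k-1}$ (which satisfies $\floor{n/2} \le 2^{k-1} < n$ because $n \ge 2^{k-1}$ and $n < 2^k$) reduces the problem to bounding $f_d(Q_{2^{k-1}})$. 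Since $2^{k-1}$ is itself a power of $2$, Lemma \ref{H-UB1} gives $f_d(Q_{2^{k-1}}) \le 2 \cdot 2^{k-1} = 2^k$, but this alone only yields $f_d(Q_n) \le 2^{k+1}$, which is too weak. So I expect I actually need the exact value $f_d(Q_{2^{k-1}}) = 2^{k-1} \cdot \text{(something)}$, or more carefully, to apply the doubling so that the final bound collapses to $2^k$ rather than $2^{k+1}$.

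On reflection, the cleaner route for the upper bound is to use Lemma \ref{H-UB1} directly on $Q_n$ in the non-power-of-two case. For each fixed $x$, the integer $n = 2^k - x$ has a known smallest odd prime factor $p$, and Lemma \ref{H-UB1} yields $f_d(Q_n) \le \frac{p-1}{p} \cdot 2n$. I would check case by case that this quantity, once combined with the parity constraint from Lemma \ref{fd-even} and the lower bounds, pins the value to $2^k$. For instance, when $x=4$ and $n=2^k-4 = 4(2^{k-2}-1)$, the factor structure gives a concrete $p$, and $\frac{p-1}{p}\cdot 2n$ should come out close to $2^k$. The lower bound would come from Lemmas \ref{1mod4-LB} and \ref{0mod4-LB}, together with the eccentricity bound $f_d(Q_n) \ge n+1$ from Corollary \ref{ecc}; since $n = 2^k - x \ge 2^k - 4$, we get $f_d(Q_n) \ge 2^k - 3$, and evenness (Lemma \ref{fd-even}) pushes this up.

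The delicate part is matching residue classes: $n = 2^k - x$ is congruent to $-x \pmod 4$, so I would split into the four cases $x \in \{1,2,3,4\}$ and track which lower-bound lemma applies. When $x \equiv 0 \pmod 4$ (i.e. $x=4$) I use Lemma \ref{0mod4-LB}; when $n \equiv 1 \pmod 4$ (i.e. $x=3$, since $2^k \equiv 0$ for $k\ge 2$) I use Lemma \ref{1mod4-LB}; for $x=1,2$ I would rely on the eccentricity bound plus evenness, or establish the needed refinement directly. In each case the lower bound should read $f_d(Q_n) \ge n + c$ for the right constant $c = x$, so that $f_d(Q_n) \ge 2^k$, matching the upper bound.

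The main obstacle I anticipate is the upper bound reaching exactly $2^k$ rather than overshooting to $2^{k+1}$. The embedded-cycle argument of Lemma \ref{H-UB1} on $C_{2n}$ gives $\frac{p-1}{p}\cdot 2n$, and for $n$ just below $2^k$ this is strictly less than $2n < 2^{k+1}$ but need not automatically equal $2^k$; I will have to verify in each of the four cases that the floor forced by evenness and the exact factorization of $2^k - x$ lands on $2^k$ precisely. I expect the case $x=1$ (so $n = 2^k-1$, which is odd and whose smallest odd prime factor depends on $k$) to be the trickiest, since the smallest odd prime factor of $2^k-1$ varies with $k$ and does not give a uniform bound; there I may instead argue directly that the standard $2n$-cycle closed set cannot be beaten below $2^k$, or invoke Lemma \ref{H-UB2} with a carefully chosen intermediate dimension to force the collapse.
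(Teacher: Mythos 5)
Your treatment of the lower bound is essentially the paper's: Corollary \ref{ecc} handles $x=1$; for $x=2$ the eccentricity bound gives the odd value $2^k-1$ and Lemma \ref{fd-even} rounds it up; and since $2^k-3\equiv 1 \pmod 4$ and $2^k-4\equiv 0 \pmod 4$, Lemmas \ref{1mod4-LB} and \ref{0mod4-LB} give $n+3$ and $n+4$ respectively, landing at $2^k$ in all four cases. The upper bound, however, has a genuine gap. Your ``cleaner route'' of applying Lemma \ref{H-UB1} directly to $Q_n$ provably cannot work once $k\ge 5$: the smallest odd prime factor always satisfies $p\ge 3$, so the cycle bound is at least $\frac{2}{3}\cdot 2n = \frac{4n}{3}$, and since $n\ge 2^k-4$ we get $\frac{4n}{3}\ge \frac{4(2^k-4)}{3} > 2^k$ whenever $2^k>16$. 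So for every $k\ge 5$ the bound $\frac{p-1}{p}\cdot 2n$ strictly overshoots $2^k$ in all four cases regardless of the actual factorization of $2^k-x$ --- no amount of case-by-case checking of residues can rescue this route. Your fallback of invoking Lemma \ref{H-UB2} with intermediate dimension $x'=2^{k-1}$ fails for the reason you yourself identified: at a power of two the only available bound is $f_d(Q_{2^{k-1}})\le 2^k$, yielding the useless $f_d(Q_n)\le 2^{k+1}$.

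The missing idea --- which you gesture at with ``a carefully chosen intermediate dimension'' but never supply --- is that the doubling must stay \emph{inside the family} $\{2^j-1,\,2^j-2\}$, where exact values can be propagated, rather than passing through the power of two. The paper first proves the case $x=1$ by induction on $k$: since $\floor{\frac{2^{k+1}-1}{2}} = 2^k-1$, Lemma \ref{H-UB2} gives $f_d(Q_{2^{k+1}-1})\le 2f_d(Q_{2^k-1})$, and the base case $f_d(Q_1)=2$ closes the induction, so $f_d(Q_{2^k-1})\le 2^k$ exactly. Then $x=2$ follows from $x=1$ because $\frac{2^k-2}{2}=2^{k-1}-1$, giving $f_d(Q_{2^k-2})\le 2f_d(Q_{2^{k-1}-1})=2^k$; and $x=3,4$ both follow from $x=2$ because $\floor{\frac{2^k-x}{2}}=2^{k-1}-2$, giving $f_d(Q_{2^k-x})\le 2f_d(Q_{2^{k-1}-2})=2^k$. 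This sequential bootstrap ($x=1$ by induction, then $x=2$ from $x=1$, then $x=3,4$ from $x=2$) is the entire content of the upper bound, and without it your sandwich does not close.
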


\begin{proof}
First, we will prove the lower bounds. When $x=1$, Corollary \ref{ecc} immediately provides the lower bound. When $x=2$ we note the lower bound given by Corollary \ref{ecc} is $2^{k}-1$, an odd number, and thus, combining this with Lemma \ref{fd-even} immediately gives the desired lower bound. In the cases of $x=3$ and $x=4$, we note that $2^k-3 \equiv 1 \mod 4$ and $2^k-4 \equiv 0 \mod 4$. Thus, Lemmas \ref{1mod4-LB} and \ref{0mod4-LB} give the desired lower bounds in these cases. 

We will prove the upper bounds sequentially. For $x=1$, we use induction along with Lemma \ref{H-UB1}. Specifically, if $k=1$, then we simply have $Q_{2^1-1}\cong K_2$ and thus $f_d(Q_1)=2^1$ as desired. We now assume, for induction, that for some $k \ge 1$ we have $f_d(Q_{2^k-1}) = 2^k$. Note that $\floor{\frac{2*2^{k}-1}{2}} = 2^k-1$. Thus by Lemma \ref{H-UB2}, we know $f_d(Q_{2^{k+1}-1}) \le 2f_d(Q_{2^k-1})$. By our inductive hypothesis, $f_d(Q_{2^k-1}) =2^k$ giving $f_d(Q_{2^{k+1}-1}) \le 2^{k+1}$ as desired.
  Since the upper and lower bounds for $f_d(Q_{2^k-1})$ are both $2^k$, we can conclude $f_d(Q_{2^k-1})=2^k$. 

  In the case of $x=2$, we can use our result for $x=1$ noting that for $k \ge 2$ we have $\frac{2^k-2}{2}$ equals $2^{k-1}-1$ where $k-1$ is at least $1$. Again using Lemma \ref{H-UB2}, we now have $f_d(Q_{2^k-2}) \le 2 f_d(Q_{2^{k-1}-1})=2^{k}$ by our previous paragraph. Thus, we also conclude $f_d(Q_{2^k-2})=2^k$. 

  Similar arguments hold for both $x=3$ and $x=4$. Here we have $\floor{\frac{2^k-x}{2}} = 2^{k-1}-2$, so again using Lemma \ref{H-UB2}, we have $f_d(Q_{2^k-x}) \le 2 f_d(Q_{2^{k-1}-2})=2^{k}$, by our previous paragraph. Again the lower and upper bounds collapse showing that $f_d(Q_{2^k-x})$ is always $2^k$ when $x \in \{1,2,3,4\}$.

\end{proof}

Numerical computations have also provided some additional results for small values of $n$. Specifically, we know:
\begin{center}
  $f_d(Q_n) =
        \begin{cases} 
        1 & \text{if } n = 0 \\
        2 & \text{if } n = 1 \\
        4 & \text{if } 2 \le n\le 3\\
        8 & \text{if } 4 \le n\le 7\\
        16 & \text{if } n=8\\
         12 & \text{if } n=9.\\
        \end{cases}$
        \end{center}
The final case of $f_d(Q_9)$ shows that $f_d(Q_n)$ is not always a power of two, and, more interestingly, that $f_d(Q_n)$ is not weakly increasing in $n$. Unfortunately, our bounds and small numerical cases did not yield a single likely conjecture on a closed form, in terms of $n$, for $f_d(Q_n)$.
\section{Graphs for which $f_d(G) < f_p(G)$}\label{JF}
As noted in the introduction, the traditional Explorer-Director parameter and the path variant are identical in the cases of cycles, the original motivation. When extending to other families of graphs, it is natural to consider the path variant as the original motivation was derived from considering the behavior of a mobile agent with an inconsistent sense of direction \cite{NM}. In general graphs, this inconsistent sense of direction may lead the agent, or token, to traverse arbitrary paths rather than only geodesics. Additionally, for the graphs explored in \cite{ED}, we note that path parameter is either identical to the classical distance parameter, as in the case of trees, or significantly smaller, as in the case of rectangular lattices. Furthermore, in other graph families, such as hypercubes, we are able to show that the path parameter was constant, relative to the number of vertices, while the distance parameter grew with the number of vertices. In these cases, the ability for the Director to choose any path, rather than just geodesics, far outstrips the power of the Explorer to call longer path lengths leading to a much smaller parameter. 

It is then natural to consider if in the path variant the Director always has more power, leading to a smaller parameter, or if there are instances where the $f_d(G,v)< f_p(G,v)$. In this section, we present one such infinite family of graphs. 
\subsection{Construction}
We will denote our infinite family of graphs satisfying $f_d(G,v)< f_p(G,v)$ by $CF_n$ and refer to them as cuttlefish graphs. $CF_n$ contains a single cycle of length $n$ with vertices around the cycle as denoted by $u_i$ for $1\le i \le n$ and edges $u_iu_{i+1 \mod n}$. We then append two paths of length $\floor{\frac{n}{2}-1}$ to $u_1$ and $u_2$. We will denote the new vertices on the path appended to $u_1$ by $v_i$ for $1\le i \le \floor{\frac{n}{2}}-1$ where $u_1v_1$ and $v_iv_{i+1}$, for $1\le i \le \floor{\frac{n}{2}}-2$ are all edges. Similarly, we will denote the new vertices on the path appended to $u_2$ by $w_i$ for $1\le i \le \floor{\frac{n}{2}}-1$ where $u_2w_1$ and $w_iw_{i+1}$, for $1\le i \le \floor{\frac{n}{2}}-2$, are all edges. For ease of notation let $u_1=v_0$ and $u_2=w_0$. Note that $CF_n$ will have $n+ 2(\floor{\frac{n}{2}}-1)$ vertices and edges.
\begin{figure}
\begin{center}
\begin{tikzpicture}
\draw [fill=black] (0,0)circle [radius=0.05] node[text=black,anchor=east]{\footnotesize $u_1$};
\draw [fill=black] (0,1.2)circle [radius=0.05] node[text=black,anchor=south]{\footnotesize $u_5$};
\draw [fill=black] (1,1.2)circle [radius=0.05] node[text=black,anchor=south]{\footnotesize $u_4$};
\draw [fill=black] (1,0)circle [radius=0.05] node[text=black,anchor=west]{\footnotesize $u_2$};
\draw [fill=black] (-.25,.6)circle [radius=0.05] node[text=black,anchor=east]{\footnotesize $u_6$};
\draw [fill=black] (1.25,.6)circle [radius=0.05] node[text=black,anchor=west]{\footnotesize $u_3$};
\draw [black, thick] (0,0)--(-.25,.6);
\draw [black, thick] (-.25,.6)--(0,1.2);
\draw [black, thick] (1,1.2)--(0,1.2);
\draw [black, thick] (1.25,.6)--(1,1.2);
\draw [black, thick] (1.25,.6)--(1,0);
\draw [black, thick] (0,0)--(1,0);
{\draw [fill=black] (0,-1)circle [radius=0.05] node[text=black,anchor=east]{\footnotesize $v_1$};}
{\draw [fill=black] (0,-2)circle [radius=0.05] node[text=black,anchor=east]{\footnotesize $v_2$};}
{\draw [fill=black] (1,-1)circle [radius=0.05] node[text=black,anchor=west]{\footnotesize $w_1$};}
{\draw [fill=black] (1, -2)circle [radius=0.05] node[text=black,anchor=west]{\footnotesize $w_2$};}
{\draw [black, thick] (0,0)--(0,-1);}
{\draw [black, thick] (0,-1)--(0,-2);}
{\draw [black, thick] (1,0)--(1,-1);}
{\draw [black, thick] (1,-1)--(1,-2);}
\filldraw[white] (.5,-2.5) circle[radius=.01] node[text=black, anchor=north]{$CF_6$};

\draw [fill=black] (5,0)circle [radius=0.05] node[text=black,anchor=east]{\footnotesize $u_1$};
\draw [fill=black] (4.65,.4)circle [radius=0.05] node[text=black,anchor=east]{\footnotesize $u_7$};
\draw [fill=black] (5.5,1.2)circle [radius=0.05] node[text=black,anchor=south]{\footnotesize $u_5$};
\draw [fill=black] (6.35,.4)circle [radius=0.05] node[text=black,anchor=west]{\footnotesize $u_3$};
\draw [fill=black] (6,0)circle [radius=0.05] node[text=black,anchor=west]{\footnotesize $u_2$};
\draw [fill=black] (6.25,.9)circle [radius=0.05] node[text=black,anchor=west]{\footnotesize $u_4$};
\draw [fill=black] (4.75,.9)circle [radius=0.05] node[text=black,anchor=east]{\footnotesize $u_6$};

\draw [black, thick] (5,0)--(4.65,.4);
\draw [black, thick] (4.65,.4)--(4.75,.9);
\draw [black, thick] (4.75,.9)--(5.5,1.2);
\draw [black, thick] (6.25,.9)--(5.5,1.2);
\draw [black, thick] (6.25,.9)--(6.35, .4);
\draw [black, thick] (6,0)--(6.35, .4);
\draw [black, thick] (5,0)--(6,0);
{\draw [fill=black] (5,-1)circle [radius=0.05] node[text=black,anchor=east]{\footnotesize $v_1$};}
{\draw [fill=black] (5,-2)circle [radius=0.05] node[text=black,anchor=east]{\footnotesize $v_2$};}
{\draw [fill=black] (6,-1)circle [radius=0.05] node[text=black,anchor=west]{\footnotesize $w_1$};}
{\draw [fill=black] (6, -2)circle [radius=0.05] node[text=black,anchor=west]{\footnotesize $w_2$};}
{\draw [black, thick] (5,0)--(5,-1);}
{\draw [black, thick] (5,-1)--(5,-2);}
{\draw [black, thick] (6,0)--(6,-1);}
{\draw [black, thick] (6,-1)--(6,-2);}
\filldraw[white] (5.5,-2.5) circle[radius=.01] node[text=black, anchor=north]{$CF_7$};
\end{tikzpicture}
\end{center}
\caption{Small examples of cuttlefish graphs}
\end{figure}
\subsection{Results}
We will show that this family of graphs does have the desired property that $f_d(CF_n,v)< f_p(CF_n,v)$ for every choice of starting vertex. More precisely we show the following. 
\begin{thm}\label{JF-path}
    For any $n\geq 10$ and any starting vertex $v \in V(CF_n)$ 
    \begin{align}
        f_p(CF_n,v) = 3\left\lfloor\frac{n}{2}\right\rfloor && \text{when $n$ is odd } \\
        f_p(CF_n,v)= \frac{3n}{2}-1 && \text{when $n$ is even}.
    \end{align}

\end{thm}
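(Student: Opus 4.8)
The plan is to treat both parities at once by proving $f_p(CF_n,v)=n+m$, where $m:=\lfloor n/2\rfloor-1$ is the common length of the two appended paths; a direct check shows that $n+m=3\lfloor n/2\rfloor$ when $n$ is odd and $n+m=\tfrac{3n}{2}-1$ when $n$ is even, so the two displayed equations are a single statement. Since $CF_n$ has $n+2m$ vertices, this asserts that under optimal play exactly $m$ vertices are left unvisited. I would run both bounds through the notion of a path closed set from Definition \ref{pathcloseddef}: the Director can hold the token inside any set that is path closed, and, by the same argument that proves Theorem \ref{closedthm}, the set of visited vertices at the end of the game must contain a path closed set. Thus it suffices to show that the minimum size of a path closed set in $CF_n$ is $n+m$ and that some path closed set of that size can be chosen to contain the (arbitrary) starting vertex.

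For the lower bound I would show every path closed set $U$ has $|U|\ge n+m$ using ``unique long path'' forcings. The key observation is that on the pure cycle any two vertices are joined by exactly the two arcs, so from $u_1$ any path longer than $n-1$ must run the arc of length $n-1$ to $u_2$ and then descend the $w$-branch; hence the unique $u_1$-endpoint at length $(n-1)+j$ is $w_j$, and $u_1\in U$ forces all of $w_1,\dots,w_m$ into $U$, with the symmetric statement for $u_2$ and the $v$-branch. The same mechanism applied to a general cycle vertex $u_i$ shows that the largest attainable lengths reach deep branch vertices uniquely, while I would argue separately that matching the many shorter lengths forces almost every cycle vertex into $U$. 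Splitting into cases according to which of $u_1,u_2$ lie in $U$, these constraints combine to preclude omitting more than $m$ vertices, yielding $|U|\ge n+m$.

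For the upper bound I would construct, for each starting vertex, a path closed set of size exactly $n+m$ containing it and have the Director confine the token there. The natural candidate keeps both appended paths (so the forcings out of $u_1$ and $u_2$ are automatically satisfied inside the set) together with all but a suitable collection of $m$ cycle vertices; because both branches are available as length reservoirs, the Director can answer any called length $l$ either by an arc on the cycle or by an arc to $u_1$ or $u_2$ followed by a descent, and I would verify this response always lands inside the set. The reflection automorphism of $CF_n$ that swaps the two branches, together with the freedom in which $m$ cycle vertices to drop, lets me place the starting vertex inside such a set in every case, which also explains why the value is independent of $v$.

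Finally, I would organize the length-matching according to the parity of $n$, since $CF_n$ is bipartite exactly when $n$ is even. When $n$ is even the side of a vertex fixes the parity of every emanating path length, so the matching need only be checked within one parity class; when $n$ is odd the odd cycle supplies both parities through the choice of arc, which is why $m$ enters the count in the same way despite the different closed forms. The hypothesis $n\ge 10$ guarantees that $m=\lfloor n/2\rfloor-1$ is large enough that a long arc followed by a full descent strictly exceeds every length reachable without entering that branch, making the forcing paths genuinely unique. I expect the crux to be the lower-bound case analysis: showing that no path closed set can discard more than $m$ vertices requires tracking, for every called length, precisely which vertices are attainable, and it is there that the floor functions and the even/odd dichotomy make the accounting most delicate.
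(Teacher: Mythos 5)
Your unified target value $n+m$ with $m=\left\lfloor\frac{n}{2}\right\rfloor-1$ is correct, your unique-long-path forcing is sound (from $u_1$, any path of length $l>n-1$ must traverse the long arc to $u_2$ and descend the $w$-branch, so its endpoint $w_{l-(n-1)}$ is unique; hence every path closed set contains all of $V\cup W\cup\{u_1,u_2\}$, and a set avoiding both branches is impossible because the maximum-length calls from any $u_i$ land only on branch leaves), and your upper-bound candidate is essentially the paper's: it keeps $V\cup W\cup\{u_1,u_2\}$ plus a consecutive half of the cycle (for odd $n$, $A=V\cup W\cup\{u_i:1\le i\le\frac{n+3}{2}\}$) and uses the reflection swapping the two branches to produce a mirror set $B$ with $A\cup B=V(CF_n)$, covering every starting vertex. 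But the lower bound has a genuine gap at exactly the point you flag as the crux: you must show that a path closed set can omit at most $m$ of the $n$ cycle vertices, and your sketch ("matching the many shorter lengths forces almost every cycle vertex into $U$") is both unproven and miscalibrated. Only about half the cycle is forced ($n-m\approx n/2$ vertices, consistent with the paper's sets omitting the $m$-vertex arc $u_{\frac{n+5}{2}},\ldots,u_n$), and once $V\cup W\subseteq U$ the shorter lengths do \emph{not} pin down individual cycle vertices: from any $x\in U$, every call $l$ lying in one of the branch windows (roughly $[i-1,\,i+m-1]$ and $[n-i+2,\,n-i+m+2]$ when $x=u_i$) can be answered inside a branch, so the binding constraints are only of the form ``$U$ contains at least one of the two cycle endpoints of the two arcs of length $l$,'' and the pairs for complementary lengths coincide. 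Turning these one-of-a-pair conditions into the bound ``at most $m$ omissions'' is a real combinatorial accounting that nothing in your outline supplies; relatedly, your asserted ``freedom in which $m$ cycle vertices to drop'' is false as stated, since a scattered omission leaves some call with both cycle endpoints deleted and both branch windows empty, which is precisely why the omitted set must be (essentially) a consecutive arc.

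It is also worth noting that the paper does not take your structural route at all: its lower bound is an explicit adaptive Explorer strategy, not a minimization over path closed sets. The Explorer first drives the token to a leaf, sweeps each pendant path (calling the minimal distance to the nearest unvisited $v_j$ below the token, so the Director can only postpone by pushing outward) to force all $2\left\lfloor\frac{n}{2}\right\rfloor$ vertices of $V\cup W\cup\{u_1,u_2\}$, then from $u_2$ calls $\left\lfloor\frac{n}{2}\right\rfloor+1$ to pin the token near the antipodal arc and exploits repeated returns to the leaves (via calls such as $n-1$ and $\left\lfloor\frac{n}{2}\right\rfloor+n-2$) together with calls of length $1,\ldots,\left\lfloor\frac{n}{2}\right\rfloor-2$ to force the remaining $\approx n/2$ cycle vertices. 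Your plan, if completed, would buy something the paper does not state (the exact minimum path closed set size, plus a path analogue of Theorem \ref{closedthm}, which does carry over by the same argument but would need to be written out), yet as it stands the decisive counting step is missing, so the proposal does not constitute a proof.
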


Furthermore, in the case of the classical distance game we are able to show the following. 
\begin{thm}\label{JF-dist}
For any even $n\geq 5$ and any starting vertex $v \in V(CF_n)$
\begin{equation}
    f_d(CF_n,v) = 2\left\lfloor\frac{n}{2}\right\rfloor.
 \end{equation}
Additionally for any odd $n \geq 5$ 
\begin{align}
     f_d(CF_n,v) &= 2\left\lfloor\frac{n}{2}\right\rfloor && \text{when $v \in V(CF_n) \setminus \{u_{\frac{n+3}{2}}\}$}\\
     f_d(CF_n,u_{\frac{n+3}{2}}) &= 2\left\lfloor\frac{n}{2}\right\rfloor+1.
\end{align}
\end{thm}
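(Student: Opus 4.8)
The plan is to reduce the entire statement to an analysis of the closed sets (Definition \ref{closeddef}) of $CF_n$, using two facts. By Theorem \ref{closedthm} the set of visited vertices always contains a closed set, so for \emph{every} starting vertex $f_d(CF_n,v)$ is at least the size of the smallest closed set. Conversely, whenever the starting vertex $v$ lies in some closed set $U$, the Director can keep the token inside $U$ forever: at a current vertex $u\in U$ and a called (hence realizable) distance $d$, closure supplies an $x\in U$ with $\dist(u,x)=d$, so the Director moves there and $f_d(CF_n,v)\le |U|$. Thus I would prove the theorem by establishing (a) every closed set has at least $2\floor{\frac{n}{2}}$ vertices; (b) every vertex other than the exceptional one lies in a closed set of size exactly $2\floor{\frac{n}{2}}$; and (c) a separate analysis of $u_{(n+3)/2}$ in the odd case.

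For (a), write $m=\floor{\frac{n}{2}}-1$ for the common tentacle length and note the diameter of $CF_n$ is $2\floor{\frac{n}{2}}-1$, realized between the two tentacle ends. A direct computation gives $\ecc(v_j)=\ecc(w_j)=j+\floor{\frac{n}{2}}$, and the vertices of eccentricity $2\floor{\frac{n}{2}}-1$ are exactly the tentacle ends $v_m,w_m$ together with the cycle vertices at distance $\floor{\frac{n}{2}}$ from $u_1$ or $u_2$. The crucial point is that the vertices realizing the maximum distance $j+\floor{\frac{n}{2}}$ from a tentacle vertex $v_j$ (with $j\ge 1$) are precisely $w_m$ and those far cycle vertices, all of eccentricity $2\floor{\frac{n}{2}}-1$. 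Hence if a closed set $U$ contains any tentacle vertex of depth at least $1$ in either tentacle, closure at that vertex forces some $w\in U$ with $\ecc(w)=2\floor{\frac{n}{2}}-1$, and since $U$ must realize every distance $0,\dots,\ecc(w)$ from $w$ we get $|U|\ge 2\floor{\frac{n}{2}}$. The only other possibility, that $U$ lies entirely on the cycle, is ruled out because closure at a cycle vertex would require realizing its distance to a tentacle end, which exceeds the cycle diameter. This proves (a), and together with the explicit sets below it identifies $2\floor{\frac{n}{2}}$ as the minimum closed-set size.

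For (b) I would use ``bent spines.'' Let $B=\{v_1,\dots,v_m\}\cup\{u_1,u_2,\dots,u_{\floor{n/2}+1}\}$, a geodesic path on $2\floor{\frac{n}{2}}$ vertices running from $v_m$ through $u_1$ and partway around the cycle; a layer-by-layer distance check (routine but lengthy) shows $B$ is closed. Its image $B'$ under the reflection swapping the two tentacles is also closed. Together with the straight spine from $v_m$ to $w_m$, these sets contain every vertex of $CF_n$ when $n$ is even, and every vertex except $u_{(n+3)/2}$ when $n$ is odd: each bent spine wraps around the cycle to the vertex just before the one opposite the edge $u_1u_2$, and for even $n$ the two arcs meet while for odd $n$ they leave exactly the middle vertex $u_{(n+3)/2}$ uncovered. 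Combined with (a), this gives $f_d(CF_n,v)=2\floor{\frac{n}{2}}$ for all $v$ when $n$ is even and for all $v\ne u_{(n+3)/2}$ when $n$ is odd.

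The exceptional vertex is the crux. Writing $n=2s+1$, the vertex $u_{(n+3)/2}=u_{s+2}$ is the unique cycle vertex at distance $s$ from both $u_1$ and $u_2$, and I would show it lies in no closed set of size $2s$. Assuming $U$ is such a set, grouping the vertices into distance layers from $u_{s+2}$ forces exactly one element of $U$ per layer, in particular one tentacle end, say $v_m$; then the unique vertex at each distance $1,2,\dots,m$ from $v_m$ is forced into $U$, which pins down all of tentacle $1$ and, via the layers from $u_{s+2}$, excludes all of tentacle $2$; matching the remaining distances from $v_m$ then forces the cycle vertices of $U$ to be exactly $u_{s+3},\dots,u_{2s+1}$. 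One checks this fully determined set is not closed, since $u_{2s+1}$ has eccentricity $s+1$ whose unique farthest vertex $w_m$ is not in $U$. Hence $u_{s+2}$ is in no closed set of size $2s$, so since the visited set both contains a closed set (necessarily of size $\ge 2s$) and contains the starting vertex, $f_d(CF_n,u_{s+2})\ge 2s+1$; the matching upper bound follows because $B\cup\{u_{s+2}\}$ is closed of size $2s+1$. I expect this forcing-and-refuting step to be the main obstacle, both because it tracks distance layers from two vertices simultaneously and because it is exactly here that the odd cycle, i.e.\ the non-bipartiteness of $CF_n$ for odd $n$, produces the single uncovered vertex.
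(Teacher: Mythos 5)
Your proposal is correct, and on the one genuinely hard point it takes a different route from the paper. The two proofs share the upper bound completely: your bent spine $B=\{v_1,\dots,v_m\}\cup\{u_1,\dots,u_{\floor{n/2}+1}\}$ is literally the paper's closed set $A$, your reflected copy is its set $B$, and $B\cup\{u_{\frac{n+3}{2}}\}$ is its set $C$; the straight spine you add is indeed closed but redundant, since it is contained in the union of the two bent spines. (You do defer the ``routine but lengthy'' check that $B$ is closed; that verification is exactly what the paper writes out in full, so it is known to go through.) For the generic lower bound the paper invokes Corollary \ref{ecc}: the Explorer calls $\ecc(a)$ from the current vertex $a$ to force the token onto a maximum-eccentricity vertex, giving $f_d\ge 2\floor{n/2}$; your (a) proves the equivalent fact statically, that every closed set has size at least $2\floor{n/2}$. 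One small inaccuracy there: your one-line dismissal of cycle-only closed sets (``distance to a tentacle end exceeds the cycle diameter'') fails for cycle vertices within distance $1$ of a junction, e.g.\ $\dist(u_1,v_m)=\floor{n/2}-1$ does not exceed the cycle diameter $\floor{n/2}$; the immediate repair is to apply closure against \emph{both} leaves, which forces $\dist(u_i,u_1)\le 1$ and $\dist(u_i,u_2)\le 1$, hence $U\subseteq\{u_1,u_2\}$, which fails closure at distance $2$. The real divergence is the exceptional vertex. The paper argues game-theoretically: an adaptive Explorer strategy that forces whole tentacles to be traversed, an analysis of unique farthest vertices, the hypothesis that the Director avoids $w_m$, an opening call of distance $1$ forcing the Director to $u_{\frac{n+1}{2}}$, and then two visited vertices at distance $\ecc(v_m)$ from $v_m$ yield $2\floor{n/2}+1$. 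You instead prove the purely structural statement that no closed set of size $2s$ (with $n=2s+1$) contains $u_{s+2}$, and I have checked your forcing argument works: the $2s$ layers from $u_{s+2}$ each get exactly one element of $U$, layer $2s-1=\{v_m,w_m\}$ gives WLOG $v_m\in U$, which pins the whole first tentacle and $u_1$ and excludes $u_2$ and all of $W$; then alternating between the pairings $\{u_{s+2-d},u_{s+2+d}\}$ and $\{u_{1+t},u_{2s+2-t}\}$ forces the cycle part of $U$ to be exactly $\{u_{s+3},\dots,u_{2s+1}\}$, and this set is not closed because no element of $U$ lies at distance $\ecc(u_{2s+1})=s+1$ from $u_{2s+1}$. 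Your deduction of $f_d(CF_n,u_{s+2})\ge 2s+1$ from Theorem \ref{closedthm} (the interior closed set either omits $u_{s+2}$, giving $+1$, or has size $\ge 2s+1$) is valid. Your static route buys a self-contained and arguably more airtight proof of the $+1$ -- the paper's strategic argument rests on claims like ``the Director will move the token to $u_{\frac{n+1}{2}}$'' that require extra justification -- at the cost of a heavier double-layer case analysis; the paper's route has the virtue of exhibiting explicit Explorer strategies, in keeping with its treatment of Theorem \ref{JF-path}.
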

Together Theorems \ref{JF-path} and \ref{JF-dist} show that for any fixed $k$, we can find a graph for which $f_p(G,v)-f_d(G,v)>k$ for some vertex $v$. However, the following remains open. 
\begin{quest}
    Given any $k$, is it possible to find a graph $G$ and vertex $v \in V(G)$ such that $
    \frac{f_p(G,v)}{f_d(G,v)}>k$?
\end{quest}

First we will prove Theorem \ref{JF-path}. 
\begin{proof}
    We will start by proving that the values given in Theorem \ref{JF-path} are lower bounds. Specifically, we will give a strategy for the Explorer that will force the token to visit at least the required number of vertices regardless of the Director's choices. We will first show that regardless of the starting vertex $v$, the Explorer can always force the token onto one of the leaves of $CF_n$. In almost every case, this only requires a single move. The only exception is when $n$ is odd and $v = u_{\frac{n+3}{2}}$ which is the vertex directly opposite $u_1$ and $u_2$ on the cycle. In this case, every path with $u_{\frac{n+3}{2}}$ as an endpoint has length at most $n-1$. Thus, if the token begins at $u_{\frac{n+3}{2}}$, the Director can keep the token on the cycle after the first round. However, in this case, the Explorer can simply choose a path length of $1$ in the first round to move the token to a vertex other than $u_{\frac{n+3}{2}}$. Once the token is on some vertex $a \neq u_{\frac{n+3}{2}}$, we note that the unique maximum length path with $a$ as an endpoint also has one of the two leaves as an endpoint. Thus, we may assume the token begins, without loss of generality, on $v_{\floor{\frac{n}{2}}-1}$.  

    Again as we are beginning with the proof of the lower bound in this section we provide a strategy for the Explorer which will guarantee that at least the desired number of vertices are visited. The Explorer's strategy will be as follows. If the token is on $v_i$ where $1 \le i\le  \floor{\frac{n}{2}}-1$, then let 
    $$U = \{v_j: j<i \text{ and } v_j \text{ has not yet been visited}\}.$$ The Explorer will select a path length of $d=\min\{\dist(v_i, v_j): v_j\in U\}$. Let $v_f \in U$ be the vertex such that $d=\dist(v_i, v_f)$. Now the Director has two choices, either they can move the token to $v_f$ or, if possible, move the token ``backwards" to $v_{i+d}$. However, once $i+d> \floor{\frac{n}{2}}-1$, the Director will have no choice but to move the token to $v_f$. 
    
    Ultimately, this strategy will cause each vertex on $\{v_i\}_{i=0}^{ \floor{\frac{n}{2}}-1}$ to be visited before the token can leave $\{v_i\}_{i=0}^{ \floor{\frac{n}{2}}-1}$. Furthermore, the last vertex in $\{v_i\}_{i=0}^{ \floor{\frac{n}{2}}-1}$ to be visited under this strategy must be $v_0=u_1$. Once the token is at $u_1$, the Explorer can then call $n+\left\lfloor\frac{n}{2}\right\rfloor-2$ to force the token to $w_{\floor{\frac{n}{2}}-1}$. The Explorer can then repeat the above strategy, now on the other path to force all the vertices $\{w_i\}_{i=0}^{\floor{\frac{n}{2}}-1}$ to be visited, ending at $w_0=u_2$. At this point, a total of $2\left\lfloor \frac{n}{2}\right\rfloor$ vertices have been visited by the token after the token first arrived at $v_{\floor{\frac{n}{2}}-1}$ - recall the token may have visited up to two additional vertices prior to this point depending on the starting vertex and parity of $n$.

    At this point in the process, the token must be at $u_2$. We will now split into cases depending on the parity of $n$. We will first assume that $n$ is odd. The Explorer will next select a path length of $\left\lfloor \frac{n}{2}\right\rfloor +1$. Note that there is no path of this length from $u_2$ to any $v_i$ or $w_i$. In fact, the only vertices $a$ for which there is a $u_2a$ path of the desired length are $u_{\floor{\frac{n}{2}}+3}$ and $u_{\floor{\frac{n}{2}}+2}$. Note that for $a \in \{u_{\floor{\frac{n}{2}}+3},u_{\floor{\frac{n}{2}}+2}, u_{\floor{\frac{n}{2}}+1}\}$ and $d \in\{ 1, 2, 3, \ldots, \left\lfloor\frac{n}{2}\right\rfloor-2\}$, we have every $av_i$ path and $aw_i$ path is of length at least $\left\lfloor\frac{n}{2}\right\rfloor-1$, but there is some $u_i$ such that there is an $au_i$ path of length $d$. Furthermore, if we have $d_1, d_2\in\{ 1, 2, 3, \ldots,\left\lfloor\frac{n}{2}\right\rfloor-2\}$, then we know it is impossible to have some $u_i$ such that there is both an $au_i$ path of length $d_1$ and an $au_i$ path of length $d_2$. Furthermore, if the token is on either of the two leaves, the Explorer can then call a path length of $n-1$ to force the token to return to a vertex in $\{u_{\floor{\frac{n}{2}}+3},u_{\floor{\frac{n}{2}}+2}, u_{\floor{\frac{n}{2}}+1}\}$. This implies that the Explorer can force the token to return to $\{u_{\floor{\frac{n}{2}}+3},u_{\floor{\frac{n}{2}}+2}, u_{\floor{\frac{n}{2}}+1}\}$ an arbitrarily large number of times. 
    
It just remains to show that the Director cannot restrict the token to some set of the form $A \cup \{v_i\}_{i=0}^{\lfloor \frac{n}{2}\rfloor - 1} \cup \{w_i\}_{i=0}^{\lfloor \frac{n}{2}\rfloor - 1}$ where $A \subseteq \{u_i\}_{i=3}^n$ and $|A|\le \lfloor\frac{n}{2}\rfloor -1$. 
To show this first note that if the token visits $u_{\floor{\frac{n}{2}}+2}$ at least $\left\lfloor\frac{n}{2}\right\rfloor$ times, then we can guarantee at least $\left\lfloor\frac{n}{2}\right\rfloor$ vertices in $\{u_i\}_{i=3}^n$ must be visited. This is because if the Explorer selects a path length of $i$ when the token is on $u_{\floor{\frac{n}{2}}+2}$ for the $i$th time, then for $1 \le i \le \lfloor \frac{n}{2}\rfloor -1$ the Director will be forced to move the token to a vertex which has never been visited immediately following the visit to $u_{\floor{\frac{n}{2}}+2}$.
If this is not the case, then the token must return to both $u_{\floor{\frac{n}{2}}+3}$ and $ u_{\floor{\frac{n}{2}}+1}$ arbitrarily many times, as we can return to both leaves arbitrarily many times. Note that for $A$ as above we can only afford to have $\lfloor\frac{n}{2}\rfloor -3$ vertices in $A$ beyond $u_{\floor{\frac{n}{2}}+3}$ and $ u_{\floor{\frac{n}{2}}+1}$. However, since the token is now guaranteed to return to both $u_{\floor{\frac{n}{2}}+3}$ and $ u_{\floor{\frac{n}{2}}+1}$ arbitrarily many times we can guarantee that the Explorer will call all path lengths $1,2, \ldots, \lfloor \frac{n}{2}\rfloor-2$ when at $u_{\floor{\frac{n}{2}}+3}$ and $ u_{\floor{\frac{n}{2}}+1}$. 
These lengths are small enough that the Director would be forced to move the token from either $u_{\floor{\frac{n}{2}}+3}$ or $ u_{\floor{\frac{n}{2}}+1}$ to a vertex in $\{u_i\}_{i=3}^n$, and thus in $A$. 
Additionally lengths $\ell \le \frac{n}{2}\rfloor-2$ are small enough that is there is a length $\ell$ $u_iu_{\floor{\frac{n}{2}}+3}$ then $\dist(u_i, u_{\floor{\frac{n}{2}}+3})=\ell$.
This is similarly true for $u_{\floor{\frac{n}{2}}+1}$.
Furthermore, only a path length of $2$ allows the Director move the token from $u_{\floor{\frac{n}{2}}+3}$ to $ u_{\floor{\frac{n}{2}}+1}$ or vice versa. Therefore, by the Explorer calling all path lengths $1,2, \ldots, \lfloor \frac{n}{2}\rfloor-2$ when at $u_{\floor{\frac{n}{2}}+3}$ we have that $|A \setminus \{u_{\floor{\frac{n}{2}}+3}, u_{\floor{\frac{n}{2}}+1}\}| \ge \lfloor\frac{n}{2}\rfloor-3$. Thus 
to keep $|A \setminus \{u_{\floor{\frac{n}{2}}+3}, u_{\floor{\frac{n}{2}}+1}\}|\le \lfloor\frac{n}{2}\rfloor -3$ we now need that every $a \in A$ has $\dist(a, u_{\floor{\frac{n}{2}}+3})$ and $\dist(a, u_{\floor{\frac{n}{2}}+1})$ both at most $\lfloor \frac{n}{2}\rfloor-2$.
This immediately implies that $u_3, u_4, u_{n-1}, u_n$ are not in $A$ as each of these vertices has distance at least $\lfloor\frac{n}{2}\rfloor-1$ to one of $u_{\floor{\frac{n}{2}}+1}$ or $u_{\floor{\frac{n}{2}}+3}$, assuming $n \ge 11$. However, we note that if the token is on a leaf, which we may assume occurs arbitrarily many times, then if the director selects a path length of $\lfloor\frac{n}{2}\rfloor +n-2$ the token must move to one of $u_3, u_4, u_{n-1}, u_n$. In either case we can now see that the Explorer can force the token to visit at least $\lfloor \frac{n}{2}\rfloor$ vertices in $\{u_i\}_{i=3}^n$ bringing the total number of vertices the token must visit to at least $3 \lfloor\frac{n}{2}\rfloor$, as desired. 
    
The even case is quite similar. Again when the token is at $u_2$, the Explorer can choose a path length of $\frac{n}{2}+1$ which now forces the token to either $u_{\frac{n}{2}+1}$ or $u_{\frac{n}{2}+3}$. Now for any vertex $a \in \{u_{\frac{n}{2}},u_{\frac{n}{2}+1},u_{\frac{n}{2}+2},u_{\frac{n}{2}+3}\}$ and $d\in\{1, 2, \ldots, \frac{n}{2}-3\}$, we have every $av_i$ path and $aw_i$ path is of length greater than $\frac{n}{2}-2$, but there is some $u_i$ such that there is an $au_i$ path of length $d$. Furthermore, if we have $d_1, d_2\in \{1, 2, \ldots, \frac{n}{2}-3\}$, then we know that it is impossible to have some $u_i$ such that there is both a $au_i$ path of length $d_1$ and a $au_i$ path of length $d_2$. Again, as the Explorer can force the token to visit both leaves an arbitrarily large number of times they can also force the token to visit $\{u_{\frac{n}{2}},u_{\frac{n}{2}+1},u_{\frac{n}{2}+2},u_{\frac{n}{2}+3}\}$ an arbitrary number of times.
    
As before, it just remains to show that the Director cannot restrict the token to some set of the form $A \cup \{v_i\}_{i=0}^{ \frac{n}{2}- 1} \cup \{w_i\}_{i=0}^{\frac{n}{2} - 1}$ where $A \subseteq \{u_i\}_{i=3}^n$ and $|A|\le \frac{n}{2} -2$. However, we again note that if the token visits either $u_{\frac{n}{2}+1}$ or $u_{\frac{n}{2}+2}$ at least $\frac{n}{2}-2$ times, then, as above, the Explorer, by picking path lengths from $1$ to $\frac{n}{2}-2$, can force the token to visit at least $\frac{n}{2}-1$ vertices in $\{u_i\}_{i=3}^n$.
If not, then again both $u_{\frac{n}{2}}$ and $u_{\frac{n}{2}+3}$ must be visited arbitrarily many times. Therefore, we can again guarantee that the Explorer will call all path lengths $1, \ldots, \frac{n}{2}-3 $ when at both $u_{\frac{n}{2}}$ and $u_{\frac{n}{2}+3}$. Again in order to keep $|A|\le \frac{n}{2} -2$ each vertex $a \in A$ must have $\dist(a,u_{\frac{n}{2}} )$ and $\dist(a,u_{\frac{n}{2}+3})$ at most $ \frac{n}{2}-3$. When $n = 10$ simply the requirement that in this case both $u_{\frac{n}{2}}, u_{\frac{n}{2}+3} \in A$ contradicts this claim as they are distance $3$ apart. When $n \ge 12$ this restriction implies $u_3, u_4, u_5, u_{n-2},u_{n-1},u_n \notin A$ as they are all too far from either $u_{\frac{n}{2}}$ or $u_{\frac{n}{2}+3}$. However, again once the token is on a leaf the Explorer can select a path length of $\frac{3n}{2}-3$ and the token will be forced to a vertex in $\{u_3, u_4, u_5, u_{n-2},u_{n-1},v_n\} $. Thus again the Director will eventually be forced to move the token to at least $\frac{n}{2}-1$ vertices in $\{u_i\}_{i=3}^{n}$ resulting in at least $ \frac{3n}{2}-1$ visited vertices, as desired. 
    
Next, we show the two upper bounds. To do this, we will demonstrate path closed sets of the desired sizes. For convenience in notation, let $V = \{v_i\}_{i=1}^{\floor{\frac{n}{2}}-1}$ and $W= \{w_i\}_{i=1}^{\floor{\frac{n}{2}}-1}$. First we consider the case when $n$ is odd. We claim that $$A=V \cup W \cup \left\{u_i: 1 \le i \le \frac{n+3}{2}\right\}$$ is a path closed set. Note that $|A|= 3\left\lfloor\frac{n}{2}\right\rfloor$, as desired. 
    
We will first consider the case when the token is on some vertex $v_i \in V$ and show that regardless of the path length, $\ell$, chosen by the Explorer the Director can always keep the token in $A$. First, assume that the Explorer selects a path length of $\ell \le i + \left\lfloor\frac{n}{2}\right\rfloor$. Then, we know there exists some vertex $a \in \{v_j: j<i\}\cup W \cup \{u_1, u_2\}$ such that $\dist(a,v_i)=\ell$. Thus, the Director can choose to move the token to $a$ and clearly $a \in A$ as desired. Now consider $\ell \ge i + \left\lfloor\frac{n}{2}\right\rfloor+1$. Note that the path $P=v_iv_{i-1}\ldots v_1u_1u_nu_{n-1}\ldots u_{\frac{n+3}{2}}$ has length $i+\left\lfloor\frac{n}{2}\right\rfloor$. Thus, the Director can choose to follow path $P$ appended with as many additional vertices from $\left\{u_i: 2\le i\le \frac{n+1}{2}\right\} \cup W$ as needed to find a vertex in $a \in A$ with a $v_ia$ path of length $\ell$.

    Now we consider when the token is on some $w_i \in W$. Initially, this is quite similar to the above case. Again, we start by considering when the Explorer selects a path length of $\ell \le i + \left\lfloor\frac{n}{2}\right\rfloor$. Here we know there exists some vertex in $a \in \{w_j: j<i\}\cup V \cup \{u_1, u_2\}$ such that $\dist(a,w_i)=\ell$. However, now we must consider two additional cases. First assume $i + \left\lfloor\frac{n}{2}\right\rfloor+1\le \ell \le i+n-1$. Here we can consider the path $P=w_iw_{i-1}\ldots w_1u_2u_1u_nu_{n-1}\ldots u_{\frac{n+3}{2}}$ which has length $i+\left\lfloor\frac{n}{2}\right\rfloor+1$. Thus, the Director can still choose to follow $P$ appended with a suitable number of vertices from $\left\{u_i: 3\le i\le \frac{n+1}{2}\right\}$ as needed to find a vertex in $a \in A$ with a $w_ia$ path of length $\ell$. Finally, if $\ell \ge i+n$ we note that the path $Q=w_iw_{i-1}\ldots w_1u_2u_3\ldots u_nu_1$ has length $i+n-1$. Now, the Director can choose to follow $Q$ appended with a suitable number of vertices from $V$ to find a vertex in $a \in A$ with a $w_ia$ path of length $\ell$. 

    Now we are only left to consider the case where the token is on some vertex $u_i \in A$. First consider $\ell \le i-1$. Then the path $P = u_iu_{i-1}\ldots u_{i-l}$ has length $\ell$ and ends at $u_{i-\ell} \in A$. If $i\le \ell\le i+ \left\lfloor \frac{n}{2}\right\rfloor -2$, then the Director can follow path $P$ from $u_i$ appended with a suitable number of vertices from $V$ to find a vertex in $a \in A$ with a $u_ia$ path of length $\ell$. 
    Now we consider $i+ \left\lfloor \frac{n}{2}\right\rfloor -1 \le \ell\le n-1$. Note that if $i = \frac{n+3}{2}$, then this case is empty. However, if $1\le i\le \frac{n+1}{2}$, then we note that $Q=u_iu_{i-1}\ldots u_1u_n u_{n-1}\ldots u_{\frac{n+3}{2}}$ is a path of length $i+ \left\lfloor \frac{n}{2}\right\rfloor -1$. Thus from $u_i$, the Director can follow $Q$ and append with a suitable number of vertices from $u_j$ where $i+1 \le j \le \frac{n+1}{2}$ to find a vertex in $a \in A$ with a $u_ia$ path of length $\ell$. Finally, we consider the case where $\ell \ge n$. Here, if $i \neq 2$ the Director will follow the longer of the two $u_iu_2$ paths followed by a suitable number of vertices from $W$. If $i=2$ then the Director will follow the longer of the two $u_2u_1$ paths followed by a suitable number of vertices from $V$.

    Thus, for every possible vertex $a \in A$ which the token is currently on and path length the Explorer could select, the Director can keep the token in $A$. Furthermore, we note that, due to the symmetry of $CF_n$, $$B=V \cup W \cup \left\{u_i: \frac{n+3}{2} \le i \le n \right\} \cup \{u_1,u_2\}$$ is a path closed set of the same size as $A$. Furthermore, each vertex $x \in V(CF_n)$ is an element of either $A$ or $B$. Thus, every vertex is contained in a path closed set with size matching the desired bound proving that when $n$ is odd, regardless of your choice of $x$, we know  $f_p(CF_n,x) = 3\left\lfloor\frac{n}{2}\right\rfloor$.  

    Now we will give a similar construction when $n$ is even.  We claim that $$A=V \cup W \cup \left\{u_i: 1 \le i \le \frac{n}{2}+1\right\}$$ is a path closed set. Note that $|A|= \frac{3n}{2}-1$, as desired. First consider when the token is on a vertex $v_i$ where $i \ge 0$, i.e. we are including the possibility that the token is on $u_1=v_0$. Then if $\ell \le i+ \frac{n}{2}$, then there is a vertex $a \in \{v_j: j\le i\} \cup W \cup \{u_2\}$ such that $\dist(v_i,a)=\ell$. Now assume $\ell \ge i + \frac{n}{2}+1$. Here, the path $P = v_iv_{i-1}\ldots v_0 u_n u_{n-1}\ldots u_{\frac{n}{2}}$ has length $i + \frac{n}{2}+1$. Thus the Director can simply follow $P$ appended with a suitable number of vertices from $\{u_i: \frac{n}{2}-1 \ge i \ge 2\} \cup W$ to find a vertex in $a \in A$ with a $v_ia$ path of length $\ell$.
    
    Now we will consider when the token is on some $w_i$ where $i \ge 0$, similarly including $u_2 = w_0$. If $\ell \le i + \frac{n}{2}$, there is a vertex $a \in \{w_j: j\le i\} \cup V \cup \{u_1\}$ such that $\dist(w_i,a)=\ell$.  Now assume $ i + \frac{n}{2}+1\le \ell \le i+n-1$. Here, the path $P = w_iw_{i-1}\ldots w_0 u_1u_n u_{n-1}\ldots u_{\frac{n}{2}+1}$ has length $i + \frac{n}{2}+1$. Thus, the Director can simply follow $P$ appended with a suitable number of vertices from $\{u_i: \frac{n}{2} \ge i \ge 3\}$ to find a vertex in $a \in A$ with a $w_ia$ path of length $\ell$. Finally, if $\ell \ge i+n$, we must simply consider the path $Q = w_iw_{i-1}\ldots w_1 u_2 u_3u_4\ldots u_{n}u_1$ which has length $i + n-1$. Thus, the Director can simply follow $Q$ appended with a suitable number of vertices from $V$ to find a vertex in $a \in A$ with a $w_ia$ path of length $\ell$.

    The only remaining cases are when the token is on some $u_i$ where $3 \le i \le \frac{n}{2}+1$. Here we start by considering path lengths $\ell \le i-1$. In this case, $\dist(u_i, u_{i-l})=\ell$, and $u_{i-l} \in A$, so the Director can clearly keep the token in $A$. If $i \le \ell \le i+ \frac{n}{2}-2$, we note that the path $P = u_iu_{i-1}\ldots u_1$ has length $i-1$ so the Director can follow $P$ appended with a suitable number of vertices from $V$ to find a vertex in $a \in A$ with a $u_ia$ path of length $\ell$. Next consider when $ i+ \frac{n}{2}-1\le \ell\le n-1$. Note if $i = \frac{n}{2}+1$, this case is empty. Otherwise, the path $Q= u_iu_{i-1}\ldots u_1u_nu_{n-1}\ldots u_{\frac{n}{2}+1}$ has length $i+\frac{n}{2}-1$. Thus the Director can follow $Q$ appended with a suitable number of vertices from $\{u_j: \frac{n}{2}+2 \ge j \ge i+1\}$ to find a vertex in $a \in A$ with a $u_ia$ path of length $\ell$. 
    Finally, consider when $\ell \ge n$. Here, we note that the path $R = u_iu_{i+1}\ldots u_nu_1u_2$ has length $n-i+2$. The Director can follow $R$ appended with a suitable number of vertices from $W$ to find a vertex in $a \in A$ with a $u_ia$ path of length $\ell$. 

     Likewise with the case where $n$ is odd, by the symmetry of $CF_n$, we note that $$B=V \cup W \cup \left\{u_i: \frac{n}{2}+2 \le i \le n \right\} \cup \{u_1,u_2\}$$ is also a path closed set with $|B|=|A|$. Again, each vertex $x \in V(CF_n)$ is an element of either $A$ or $B$. Thus, every vertex is contained in a path closed set with size matching the desired bound proving that when $n$ is even, regardless of your choice of $x$, we know  $f_p(CF_n,x) = \frac{3n}{2}-1$, as desired.  
\end{proof} 

We will now prove Theorem \ref{JF-dist}.
\begin{proof}
We will again begin with the lower bound. For all the cases except for when $n$ is odd and the starting vertex is $u_{\frac{n+3}{2}}$, the lower bound follows from Corollary \ref{ecc}. We must simply show that there are vertices of eccentricity $2\left\lfloor\frac{n}{2}\right\rfloor-1$ in $CF_n$ and that regardless of the starting position of the token the Explorer can always force the token to a vertex of maximum eccentricity. We note that regardless of the parity of $n$, the two leaves of $CF_n$ are always vertices of maximum eccentricity and their eccentricity is $2\left\lfloor\frac{n}{2}\right\rfloor-1$. Additionally, when $n$ is even $u_{\frac{n}{2}+1}$ and $u_{\frac{n}{2}+2}$ also have maximum eccentricity, and when $n$ is odd $u_{\frac{n+3}{2}}, u_{\frac{n+1}{2}}$, and $u_{\frac{n-1}{2}}$ also have maximum eccentricity. Furthermore, if the token is on any vertex $a$ and the Explorer selects a distance of $ecc(a)$ the token must be moved to a vertex of maximum eccentricity. This is sufficient to show that when $n$ is even and $v$ is any starting vertex we know $f_d(CF_n,v) \ge 2\left\lfloor\frac{n}{2}\right\rfloor$. We will show below that in all cases except when $n$ is odd and the starting vertex is $u_{\frac{n+3}{2}}$ this lower bound is, in fact, equality. 

However, when $n$ is odd and the starting vertex is $u_{\frac{n+3}{2}}$ we show that the Explorer can force the token to visit an additional vertex. First note that as in the proof of Theorem \ref{JF-path}, if the token is ever on $v_{\floor{\frac{n}{2}}-1}$ then the Explorer can force the token to visit each $v_i$, where $i \ge 0$ (i.e. including $v_0=u_1$). Similarly, if the token is ever on $w_{\floor{\frac{n}{2}}-1}$, then the Explorer can force the token to visit each $w_i$, where $i \ge 0$ (i.e. including $w_0=u_2$). As a reminder, the Explorer's strategy here is that if the token is on $v_i$ where $1 \le i\le  \floor{\frac{n}{2}}-1$ then the Explorer will select a distance of $d=\min\{\dist(v_i, v_j): v_j\in U\}$ where
$$U = \{v_j: j<i \text{ and } v_j \text{ has not yet been visited}\}.$$ 
The Director has two choices, either they can move the token to $v_j$ or, if possible, move the token ``backwards" to $v_{i+d}$. However, once $i+d> \floor{\frac{n}{2}}-1$, the Director will have no choice but to move the token to a previously unvisited vertex. 
Thus, if the token begins at $u_{\frac{n+3}{2}}$ and at some point visits both $v_{\floor{\frac{n}{2}}-1}$ and $w_{\floor{\frac{n}{2}}-1}$, then we know the Explorer can force the token to at least $2\left\lfloor\frac{n}{2}\right\rfloor+1$ vertices, as desired. Furthermore, since the token starts at $u_{\frac{n+3}{2}}$ the Explorer will first call a distance of $n-2$ which forces the Director to immediately move the token to one of $v_{\floor{\frac{n}{2}}-1}$ or $w_{\floor{\frac{n}{2}}-1}$. 
Thus we will assume, without loss of generality, that the Director plays such that on the first turn they move the token to $v_{\floor{\frac{n}{2}}-1}$ and for the rest of the game they ensure that the token never visits $w_{\floor{\frac{n}{2}}-1}$.
Now we note that for $u_i$ where $3 \le i \le \frac{n+1}{2}$, we have $v_{\floor{\frac{n}{2}}-1}$ being the unique vertex of distance $ecc(u_i)$ from $u_i$. Similarly for $u_i$ where $\frac{n+5}{2}\le i \le n$, we have $w_{\floor{\frac{n}{2}}-1}$ being the unique vertex of distance $ecc(u_i)$ from $u_i$. 
Finally, $u_{\frac{n+3}{2}}$ has exactly two vertices, $w_{\floor{\frac{n}{2}}-1}$ and $v_{\floor{\frac{n}{2}}-1}$, of distance $ecc(u_{\frac{n+3}{2}})$ away. Additionally, for any $w_i$ where $0\le i$, we know there are three vertices $u_{\frac{n+5}{2}}, u_{\frac{n+3}{2}}$, and $v_{\floor{\frac{n}{2}}-1}$ of distance $ecc(w_i)$ from $w_i$. Similarly for any $v_i$ where $i\ge 0$, we know there are three vertices $u_{\frac{n+3}{2}}, u_{\frac{n+1}{2}}$, and $w_{\floor{\frac{n}{2}}-1}$ of distance $ecc(v_i)$ from $v_i$. 
Therefore, if the token begins at $u_{\frac{n+3}{2}}$ and we know the Director will avoid moving the token to $w_{\floor{\frac{n}{2}}-1}$, then the Explorer can force the token to visit $v_{\floor{\frac{n}{2}}-1}$ an arbitrarily large number of times. 
Now, when the token begins at $u_{\frac{n+3}{2}}$, the Explorer can first call a distance of $1$. As the Director does not want to put the token in a position where they can be forced to move it to $w_{\floor{\frac{n}{2}}-1}$, they will move the token to $u_{\frac{n+1}{2}}$. This means the token has now visited two vertices of distance $ecc(v_{\floor{\frac{n}{2}}-1})$ from $v_{\floor{\frac{n}{2}}-1}$. Thus, we know the token must visit at least $ecc(v_{\floor{\frac{n}{2}}-1})+2 = 2\left\lfloor\frac{n}{2}\right\rfloor+1$ vertices, proving $f_d(CF_n,u_{\frac{n+3}{2}}) \ge 2\left\lfloor\frac{n}{2}\right\rfloor+1$.

For the upper bound, we will again show that as long as the starting vertex $v$ is not $u_{\frac{n+3}{2}}$, there is a closed set of size $2\left\lfloor\frac{n}{2}\right\rfloor$ containing $v$. We claim that $A= V \cup \{u_i: 1 \le i \le \left\lfloor\frac{n}{2}\right\rfloor+1\}$ is a closed set. Note that $|A|= 2\left\lfloor\frac{n}{2}\right\rfloor$. We will first consider vertices $v_i \in V$. Here we have that $ecc(v_i)= i+ \left\lfloor \frac{n}{2}\right\rfloor$. Furthermore for any $1 \le d \le i-1$, we know $\dist(v_i, v_{i-d})=d$. If $i \le d \le i+ \left\lfloor \frac{n}{2}\right\rfloor$, then $\dist(v_i, u_{d-i+1})=d$. Thus, since all elements of $V$ are in $A$ and $d-i+1 \le \left\lfloor \frac{n}{2}\right\rfloor+1$, we know the Director can always keep the token in $A$ if it is currently on some $v_i$. If the token is on $u_1$, then we note $ecc(u_1)=\left\lfloor\frac{n}{2} \right \rfloor$ and for any distance $1 \le d \le \left\lfloor\frac{n}{2} \right \rfloor$ we have $\dist(u_1, u_{1+d})=d$. Thus if the token is on $u_1$, the Director can again ensure that it will stay in $A$ regardless of what distance the Explorer selects. Finally, we consider when the vertex is on $u_i$ for $2\le i\le\left\lfloor\frac{n}{2} \right \rfloor+1$. Here, $ecc(u_i)=i+\left\lfloor\frac{n}{2} \right \rfloor-2$. Furthermore, if $1 \le d \le i-1$, then $\dist(u_i, u_{i-d})=d$. Finally, if $i \le d \le i+\left\lfloor\frac{n}{2} \right \rfloor-2$, then $\dist(u_i,v_{d-i+1})=d$. Note that for this range of $d$, we have $1 \le d-i+1 \le \left\lfloor\frac{n}{2} \right \rfloor-1$. Thus, $A$ is a closed set, as desired. 

We note that when $n$ is even, by the symmetry of $CF_n$, we have $B= W \cup \{u_i: \frac{n}{2}+2 \le i \le n\}\cup \{u_1,u_2\}$ is also a closed set of the same size as $A$. Furthermore, each vertex of $CF_n$ lies in either $A$ or $B$. Thus regardless of the starting vertex $v$, we know $f_d(CF_n,v) = 2\left\lfloor\frac{n}{2}\right\rfloor$ for even $n$. When $n$ is odd, the set $B$ we get by considering the same symmetries is $B= W \cup \{u_i: \frac{n+5}{2} \le i \le n\}\cup \{u_1,u_2\}$. Now we note that $u_{\frac{n+3}{2}}$ is in neither $A$ nor $B$. However, we have already shown that the lower bound when the starting vertex is $u_{\frac{n+3}{2}} $ is $2\left\lfloor\frac{n}{2}\right\rfloor+1$. Clearly since we have shown $A$ is always a closed set we also know $C=V \cup \{u_i: 1 \le i \le {\frac{n+3}{2}}\}$ is also a closed set of size $2\left\lfloor\frac{n}{2}\right\rfloor+1$ containing $u_\frac{n+3}{2}$. Thus we have $f_d(CF_n, u_{\frac{n+3}{2}})=2\left\lfloor\frac{n}{2}\right\rfloor+1$ as desired.

\end{proof}
\section{Conclusion and Further Research}

We conclude this paper with a discussion on further research. This paper began the study of $f_p(G,v)$ on a variety of graphs. It would be interesting to work towards some classification of when $f_p(G,v) \ge f_d(G,v)$. For instance, here we showed that for all bipanpositionable graphs with $ecc(G)\ge 3$, we know $f_d(G,v) \ge f_p(G,v)$. Additionally, it would be interesting to investigate if there are infinite graph families where $\frac{f_p(G,v)}{f_d(G,v)}$ grows as the number of vertices of $G$ grows. 

One can also consider the number of steps required to visit $f_p(G,v)$ vertices for various graph families, as was done for the traditional Explorer-Director game in a variety of papers \cite{CLST,HPW,N10,N14,ED}. This then furthers the research into determining the complexity of the problem. Similarly, work into what graph families allow for non-adaptive strategies for the Explorer (i.e. strategies for the Explorer which are independent of the Director's choices) which still achieve $f_p(G,v)$ visited vertices is of interest. Such non-adaptive strategies have already been explored for $f_d(P_n,v)$ in \cite{ED}. Finally, there are clearly many more interesting graph families, including random graphs, for which the values and discrepancy between $f_d(G,v)$ and $f_p(G,v)$ would be of interest.

\nocite{*}
\bibliographystyle{abbrv}
\bibliography{ED2bib}
\end{document}